\newcommand{\Ol}{{\mathcal O}}
\newcommand{\f}{\varphi}
\newcommand{\oz}{\overline{Z}}
\newcommand{\vt}{\vartheta}
\newcommand{\ac}{{\mathcal H}}
\newcommand{\E}{{\mathcal E}}
\newcommand{\hD}{{\widehat D}}
\newcommand{\V}{{\mathcal V}}
\newcommand{\W}{{\mathcal W}}
\newcommand{\pu}{{\mathbb P^1}}
\newcommand{\proj}{\mathbb P}
\newcommand{\R}{\mathbb R}
\newcommand{\quadr}{\mathbb Q}
\newcommand{\pd}{{\mathbb P^2}}
\newcommand{\tl}{\widetilde}
\newcommand{\Dcap}{\widehat D}
\DeclareMathOperator{\loc}{\mathrm{Locus}}
\DeclareMathOperator{\cloc}{\mathrm{ChLocus}}
\newcommand{\ratcurves}{\textrm{Ratcurves}^n(X)}
\newcommand{\Ch}{\textrm{Chow}}
\DeclareMathOperator{\cycl}{N_1}
\newcommand{\conx}[1]{\cone\,(#1,X)}
\newcommand{\cycx}[1]{\cycl(#1,X)}
\DeclareMathOperator{\cone}{NE}
\DeclareMathOperator{\pic}{Pic}
\DeclareMathOperator{\Exc}{Exc}
\newcommand{\rc}[2]{#1 \xymatrix{\ar@{-->}[r] & }{#2}}
\begin{document}

\newtheorem{theorem}{Theorem}[section]
\newtheorem*{theorem*}{Theorem}
\newtheorem{lemma}[theorem]{Lemma}
\newtheorem{proposition}[theorem]{Proposition}
\newtheorem{corollary}[theorem]{Corollary}
\newtheorem{example}[theorem]{Example}

\newtheorem{teo}{Theorem}[subsection]
\newtheorem{lem}[teo]{Lemma}
\newtheorem{prop}[teo]{Proposition}
\newtheorem{cor}[teo]{Corollary}

\theoremstyle{definition}
\newtheorem{definition}[theorem]{Definition}
\newtheorem{statement}[theorem]{}
\theoremstyle{remark}
\newtheorem{remark}[theorem]{Remark}

\theoremstyle{definition}
\newtheorem{defi}[teo]{Definition}
\newtheorem{ex}[teo]{Example}
\theoremstyle{remark}
\newtheorem{rmk}[teo]{Remark}

\renewcommand{\theequation}{{\arabic{section}.\arabic{theorem}.\arabic{equation}}}

\author{Carla Novelli}
\author{Gianluca Occhetta}

\address{Dipartimento di Matematica ``F. Casorati'',\newline Universit\`a di Pavia,\newline via Ferrata 1,\newline I-27100 Pavia}
\curraddr{Dipartimento di Matematica e Applicazioni, \newline Universit\`a di Milano - Bicocca,\newline via R. Cozzi 53, \newline I-20126 Milano}
\email{carla.novelli@unimib.it}

\address{Dipartimento di Matematica,\newline Universit\`a di Trento, \newline via Sommarive 14,\newline I-38050 Povo (TN)}
\email{gianluca.occhetta@unitn.it}

\subjclass[2010]{14J40, 14E30, 14C99}
\keywords{Rational curves, extremal rays}
\title{Manifolds covered by lines and extremal rays}

\begin{abstract}
Let $X$ be a smooth complex projective variety and let $H \in \pic(X)$ be an ample line bundle.
Assume that $X$ is covered by rational curves with degree one with respect to $H$
and with anticanonical degree greater than or equal to $(\dim X -1)/2$.
We prove that there is a covering family of such curves whose numerical
class spans an extremal ray in the cone of curves $\cone(X)$.
\end{abstract}

\maketitle


\section*{Introduction}

Let $X$ be a smooth complex projective variety which admits a morphism with connected fibers $\f \colon X \to Z$
onto a normal variety $Z$ such that the anticanonical bundle $-K_X$ is $\f$-ample, $\dim X > \dim Z$ and  
$\rho_X=\rho_Z+1$ ({\em i.e.} an elementary extremal contraction of fiber type).\\
It is well known, by fundamental results of Mori theory, that through every point of $X$
there is a rational curve contracted by $\f$. The numerical classes of these curves
lie in an extremal ray of the cone $\cone(X)$. By taking a covering family of such curves one obtains
a {\em quasi-unsplit} family of rational curves, {\em i.e.} a family such that
the irreducible components of all the degenerations of curves in the family are numerically
proportional to a curve in the family.
It is very natural to ask if the converse is also true:\par
\bigskip
\begin{center}
\begin{minipage}[center]{10cm}
Given a covering 
quasi-unsplit family $V$ of rational curves, is there an extremal elementary contraction which contracts
all curves in the family or, in other words, does the numerical class of a  curve
in the family span an extremal ray of $\cone(X)$?
\end{minipage}
\end{center}\par
\bigskip
As proved in  \cite{Cam} (see also \cite{De} and \cite{Kob}) there is always a rational
fibration, defined on an open set of $X$, whose general fibers are proper, which contracts
a general curve in~$V$.
More precisely, a general fiber is an equivalence class with respect to the  
relation induced by the closure $\V$ of the family $V$ in the Chow scheme of $X$  in the following way:
two points $x$ and $y$ are equivalent if there exists a connected chain of cycles in $\V$
which joins $x$ and $y$.\par
\medskip
By a careful study of this fibration and of its indeterminacy locus, a partial answer to 
this question has been given in \cite[Theorem 2]{BCD}; namely, if the dimension of a general equivalence
class is greater than or equal to the dimension of the variety minus three then the
numerical class of a general curve in the family spans an extremal ray of $\cone(X)$.\par
\medskip
Before the results in \cite{BCD} a special but very natural situation
in which the question arises has been studied in \cite{BSW}. In that paper
manifolds covered by rational curves of degree one with respect
to an ample line bundle $H$ were considered, and it was proved that a covering family of such curves
(we will call them lines, by abuse) of anticanonical degree 
greater than or equal to $\frac{\dim X + 2}{2}$ spans
an extremal ray (see also \cite[Theorem 2.4]{BI}).\par
\medskip
Recently, in \cite[Theorem 7.3]{NO2}, the extremality of a covering family $V$ of lines was proved
under the weaker assumption that the anticanonical degree of such curves,  denoted by abuse of notation
by $-K_X \cdot V$, is greater than or equal to $\frac{\dim X + 1}{2}$.\par
\medskip

The goal of the present paper is to prove the following
\begin{theorem*}
Let $(X,H)$ be a polarized manifold  with a dominating family
of rational curves $V$ such that $H \cdot V=1$.
If $-K_X \cdot V \ge \frac{\dim X - 1}{2}$, then $[V]$ spans an extremal ray of $\cone(X)$.
\end{theorem*}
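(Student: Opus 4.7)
The aim is to show that a general chain-connectedness class of the family $V$ has dimension at least $\dim X-3$, so that the extremality of $[V]$ follows immediately from \cite[Theorem 2]{BCD}.

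First I would observe that $V$ is automatically unsplit. Indeed, since $H$ is ample and $H\cdot V=1$, any reducible degeneration of a curve in $V$ would be a connected chain whose irreducible components each have $H$-degree at least $1$, forcing total $H$-degree at least $2$; hence every cycle in the Chow closure $\V$ is irreducible and numerically proportional to $[V]$.

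Next, I fix a general point $x\in X$ and let $F$ denote its equivalence class under chain-connectedness with respect to $\V$, so that $F\supseteq \loc(V^k,x)$ for every $k\geq 1$. The standard deformation-theoretic locus estimate for unsplit families gives
\[
\dim\loc(V,x)\;\geq\;-K_X\cdot V-1\;\geq\;\tfrac{\dim X-3}{2},
\]
and, more generally, $\dim\loc(V,Y)\geq \dim Y+(-K_X\cdot V)-1$ as long as some curve of $V$ meets the irreducible set $Y$ without being contained in it. When the iterated locus genuinely grows at the second step, i.e.\ $\loc(V^2,x)\supsetneq\loc(V,x)$, applying the estimate to $Y=\loc(V,x)$ yields
\[
\dim F\;\geq\;\dim\loc(V^2,x)\;\geq\;2(-K_X\cdot V-1)\;\geq\;\dim X-3,
\]
and \cite[Theorem 2]{BCD} concludes.

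The central obstacle is the complementary case, in which $\loc(V,x)=\loc(V^2,x)=F$: here no curve of $V$ meeting $F$ escapes $F$, and the naive bound only gives $\dim F\geq(\dim X-3)/2$, generally below the BCD threshold once $\dim X\geq 5$. I plan to handle this by studying the sub-family $V_F\subset V$ of curves contained in $F$, which is unsplit and dominates $F$ while retaining $H\cdot V_F=1$ and $-K_X\cdot V_F\geq(\dim X-1)/2$. The strategy is either to produce, via a bend-and-break or deformation argument inside $F$, an unexpected degeneration of a curve in $V$ that exits $F$ — contradicting the closedness assumption and bringing us back to the growing case — or, failing that, to reduce the question to a lower-dimensional instance of the theorem on a resolution of $F$ and promote the resulting extremality back to $X$ via the pairings with $H$ and $-K_X$. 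I expect this $V$-closed scenario to absorb the bulk of the technical work, and its resolution to be exactly where the sharpening from $(\dim X+1)/2$ to $(\dim X-1)/2$ compared to \cite[Theorem 7.3]{NO2} is secured.
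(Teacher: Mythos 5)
Your reduction to \cite[Theorem 2]{BCD} cannot work, because the dimension hypothesis of that theorem is simply false in many instances of the statement you are trying to prove. Take $X=\proj_Y(\E)$ a $\proj^m$-bundle over a smooth $(m+1)$-dimensional base $Y$ and let $V$ be the family of lines in the fibers: then $H\cdot V=1$ for a suitable polarization, $-K_X\cdot V=m+1\ge\frac{\dim X-1}{2}$, but every rc$(\V)$-equivalence class is a fiber, of dimension $m$, which is far below $\dim X-3=2m-2$ as soon as $m>2$. This is exactly your ``$V$-closed'' case, and it is not a residual technicality: it is the generic shape of the fiber-type contraction one is trying to exhibit, so deferring it defers the whole theorem. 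The two ideas you sketch for it do not function. Bend-and-break cannot produce ``an unexpected degeneration of a curve in $V$'': as you yourself observe, $H\cdot V=1$ forces $V$ to be unsplit, so no degenerations exist at all, and in any case closedness of $F$ under irreducible members of $V$ would not be contradicted by a reducible cycle. And extremality of the restricted family on (a resolution of) $F$ says nothing about extremality of $[V]$ in $\cone(X)$; there is no mechanism in your sketch for promoting it, precisely because curves outside $F$ are not controlled.

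There is also a gap in your ``growing'' case. The estimate $\dim\loc(V)_Y\ge\dim Y+(-K_X\cdot V)-1$ is not valid under the hypothesis ``some curve of $V$ meets $Y$ and is not contained in it''; the standard versions (as in Lemma (\ref{primaopoi}) or Corollary (\ref{locvf})) require $[V]\not\in\conx{Y}$, which fails for $Y=\loc(V_x)$, a set swept out by $V$-curves. So even when $\loc(V^2,x)\supsetneq\loc(V,x)$ you cannot conclude $\dim\loc(V^2,x)\ge 2(-K_X\cdot V-1)$: the increment at the second step may be arbitrarily small. The paper's proof proceeds quite differently: it studies the adjoint divisor $K_X+mH$ with $m=-K_X\cdot V$; when it is nef, extremality follows from Proposition (\ref{ivan}) via the structure of the indeterminacy locus $B$; when it is not, the fiber locus inequality forces the negative ray to have fibers of dimension $m$ or $m+1$, and one uses Fujita's projective-bundle criterion \cite{Fuj4}, the Andreatta--Wi\'sniewski blow-up theorem \cite{AWduke}, the blow-down Lemma (\ref{blow}), Lemma (\ref{inB}) applied to curves $C\subset B$ with $\Dcap\cdot C\le 0$, and explicit cone and splitting-type arguments (for $\rho_X=2$ and $\rho_X\ge 3$ separately). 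None of that machinery, which is where the improvement from $\frac{\dim X+1}{2}$ to $\frac{\dim X-1}{2}$ actually happens, is present in your plan.
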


The main idea is, as in \cite{NO2}, to combine the ideas and tecniques of \cite{BSW}, especially
taking into consideration a suitable adjoint divisor $K_X +mH$ and studying its nefness,
with those of \cite{BCD}, in particular regarding the existence of special curves in the indeterminacy
locus of the rational fibration associated to $V$.


\section{Background material}

Let $X$ be a smooth projective variety defined over the field of complex numbers.
A {\em contraction} $\f \colon X \to Z$ is a proper surjective map with connected fibers onto a normal variety $Z$.\\
If the canonical bundle $K_X$ is not nef, then the negative part of
the cone $\cone(X)$ of effective 1-cycles is locally polyhedral, by the Cone Theorem.
By the Contraction Theorem, to every face in this part of the cone  is associated
a contraction.\\
Unless otherwise stated, we will reserve the name {\em extremal face} for a face contained in $\overline{\cone}(X) \cap \{a \in \cycl(X)\;|\; K_X \cdot a <0\}$, and we will call {\em extremal contraction}
the contraction of such a face.\\
An extremal contraction associated to an extremal face of dimension one,
{\em i.e.} to an extremal ray, is called an {\em elementary
contraction}; an extremal ray $\tau$ is called {\em numerically effective}, and the
associated contraction is said to be of {\em fiber type}, if $\dim Z < \dim X$;
otherwise the ray is called {\em non nef} and the contraction is {\em birational}.\\
If the codimension of the exceptional locus of an elementary birational contraction
is equal to one, the ray and the contraction are called {\em divisorial}, otherwise they are 
called {\em small}.\\
A Cartier divisor which is the pull-back of an ample divisor $A$ on $Z$ is called a {\em supporting divisor}
of the contraction $\f$.\\
If the anticanonical bundle of $X$ is ample, $X$ is called a Fano manifold.
For a Fano manifold, the {\em index}, denoted by $r_X$, is defined as the largest natural number $r$ 
such that $-K_X=rH$ for some (ample) divisor $H$ on $X$.\\
Throughout the paper, unless otherwise stated, we will use the word {\em curve} to denote
an irreducible  curve.

\begin{definition} \label{Rf}
A {\em family of rational curves} is an irreducible component
$V \subset \ratcurves$ (see \cite[Definition 2.11]{Kob}).
Given a rational curve we will call a {\em family of
deformations} of that curve any irreducible component of  $\ratcurves$ 
containing the point parametrizing that curve.
We will say that $V$ is {\em unsplit} if it is proper.
We define $\loc(V)$ to be the set of points of $X$ through which there is a curve among those
parametrized by $V$; we say that $V$ is a {\em covering family} if ${\loc(V)}=X$ and that $V$ is a 
{\em dominating family} if $\overline{\loc(V)}=X$.\\
We denote by $V_x$ the subscheme of $V$ parametrizing rational curves 
passing through $x \in \loc(V)$ and by $\loc(V_x)$ the set of points of $X$ 
through which there is a curve among those parametrized by $V_x$.\\
By abuse of notation, given a line bundle $L \in \pic(X)$, we will denote by $L \cdot V$
the intersection number $L \cdot C_V$, with $C_V$ any curve among those
parametrized by $V$.
\end{definition}

\begin{proposition}{\rm (}\cite[IV.2.6]{Kob}{\rm )}\label{iowifam}
Let $V$ be an unsplit family of rational curves on $X$. Then
  \begin{itemize}
       \item[(a)] $\dim \loc(V)+\dim \loc(V_x) \ge \dim X  -K_X \cdot V -1$;
       \item[(b)] every irreducible component of $\loc(V_x)$ has dimension $\ge -K_X \cdot V -1$.
    \end{itemize}
\end{proposition}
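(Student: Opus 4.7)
The plan is to combine the standard Riemann--Roch lower bounds on $\om(\proj^1,X)$ with the hypothesis that $V$ is unsplit (hence proper), which forces the evaluation map to be generically finite whenever two distinct points on the source are fixed.

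First I would recall that for any morphism $f : \proj^1 \to X$ representing a point of $V$, the computation $\chi(\proj^1, f^*T_X) = -K_X \cdot V + \dim X$ gives the local deformation bound
$$\dim_{[f]} \om(\proj^1,X) \ \ge \ -K_X \cdot V + \dim X.$$
Quotienting by the three-dimensional $\Aut(\proj^1)$-action yields $\dim V \ge -K_X \cdot V + \dim X - 3$, while fixing $f(0)=x$ (which imposes $\dim X$ conditions) and then dividing by the two-dimensional stabilizer of $0$ in $\Aut(\proj^1)$ gives
$$\dim W \ \ge \ -K_X \cdot V - 2$$
for every irreducible component $W$ of $V_x$.

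For (b), I would analyze the evaluation on the $\proj^1$-bundle $\Univ(W) \to W$, whose image is an irreducible component of $\loc(V_x)$. If the fiber over some $y$ in the image with $y \ne x$ were positive-dimensional, one would obtain a one-parameter subfamily of $V$ whose members all pass through the two distinct points $x$ and $y$; the standard bend-and-break principle then produces a reducible $1$-cycle in the closure of this subfamily in $\Ch$, contradicting the properness of $V$. The evaluation is therefore generically finite, giving $\dim \loc(W) = \dim W + 1 \ge -K_X \cdot V - 1$, which is (b) applied componentwise. For (a), a parallel count on $\Univ(V) \to X$ shows that the fiber over a general $x \in \loc(V)$ is finite over $V_x$, so $\dim V + 1 = \dim \loc(V) + \dim V_x$; combining with the equality $\dim V_x = \dim \loc(V_x) - 1$ established above and with the bound $\dim V \ge -K_X \cdot V + \dim X - 3$ yields
$$\dim \loc(V) + \dim \loc(V_x) \ = \ \dim V + 2 \ \ge \ \dim X - K_X \cdot V - 1.$$

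The only non-formal ingredient, and the one I expect to be the main obstacle, is the generic-finiteness step: it is the unique place where the unsplit hypothesis is used, and it requires the bend-and-break principle to exclude positive-dimensional families of curves of $V$ joining two distinct points. Every other step is routine fiber-dimension bookkeeping on $\proj^1$-bundles and homogeneous spaces.
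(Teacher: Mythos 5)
The paper gives no proof of this proposition: it is quoted directly from Koll\'ar [IV.2.6], and your argument is essentially the standard proof from that source — the Riemann--Roch/deformation bound $\dim_{[f]}\om(\proj^1,X)\ge -K_X\cdot V+\dim X$ combined with bend-and-break and the properness of $V$ to force generic finiteness of the evaluation map on $\Univ(V_x)$. Your bookkeeping is correct (and, as a minor remark, upper semicontinuity of fiber dimension makes the inequality in (a) valid for every $x\in\loc(V)$, not just a general one), so there is nothing further to compare.
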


This last proposition, in case $V$ is the unsplit family of deformations of a rational curve of minimal 
anticanonical degree in an extremal face of $\cone(X)$, gives the {\em fiber locus inequality}:

\begin{proposition}{\rm (}\cite[Theorem 0.4]{Io}, \cite[Theorem 1.1]{Wicon}{\rm )}\label{fiberlocus}
Let $\f$ be a Fano--Mori contraction
of $X$. Denote by $E$ the exceptional locus of $\f$ and
by $F$ an irreducible component of a non-trivial fiber of $\f$. Then
$$\dim E + \dim F \geq \dim X + \ell -1,$$
where $\ell :=  \min \{ -K_X \cdot C\ |\  C \textrm{~is a rational curve in~} F\}$.
If $\f$ is the contraction of an extremal ray $\tau$, then $\ell(\tau):=\ell$ is called the {\em length of the ray}.
\end{proposition}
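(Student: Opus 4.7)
The plan is to apply Proposition~\ref{iowifam}(a) to an unsplit family of rational curves obtained by deforming a minimum-degree curve in $F$.

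I would first choose a rational curve $C\subset F$ realizing $-K_X\cdot C=\ell$, and let $V$ be the irreducible component of $\ratcurves$ containing $[C]$. Fixing a supporting divisor $L=\f^*A$ with $A$ ample on $Z$, the equality $L\cdot C=0$ propagates through the family, giving $L\cdot\gamma=0$ for every $\gamma\in V$; hence every such $\gamma$ is contracted by $\f$, and in particular $\loc(V)\subseteq E$.

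Next I would argue that $V$ is unsplit. A potential degeneration of a curve in $V$ has the form $\sum_{i=1}^{k}\gamma_i$ with each $\gamma_i$ rational, and since $L$ is nef with $L\cdot\bigl(\sum\gamma_i\bigr)=0$, each component $\gamma_i$ is contained in some fiber of $\f$. Restricting attention to degenerations internal to $F$ (that is, working in the closed subscheme of $\ratcurves$ cut out by the condition of being supported in $F$), every limit component lies in $F$, so the minimality of $\ell$ on $F$ gives $-K_X\cdot\gamma_i\geq\ell$ for each $i$; combined with $\sum_i(-K_X\cdot\gamma_i)=-K_X\cdot V=\ell$, this forces $k=1$.

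For $x$ a general point of $F$, namely one lying in no other irreducible component of $\f^{-1}(\f(x))$, any curve of $V_x$ is connected, passes through $x$, and is contracted by $\f$, so it is contained in $F$; hence $\loc(V_x)\subseteq F$. Applying Proposition~\ref{iowifam}(a) at such a point and combining with the inclusions just proved gives
\[
\dim E+\dim F\geq\dim\loc(V)+\dim\loc(V_x)\geq\dim X-K_X\cdot V-1=\dim X+\ell-1,
\]
as required. The main obstacle is the unsplitness step: one must be sure that the minimality of $\ell$ on $F$ can be leveraged in the analysis of limit cycles, since a priori components of a degeneration could escape into neighboring fibers and evade the bound. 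The resolution is to restrict to the Chow-complete locus of cycles supported in $F$, where the $\f$-ampleness of $-K_X$ keeps every limit component honest and the minimality on $F$ delivers the contradiction.
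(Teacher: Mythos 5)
Your skeleton is the intended one --- the paper does not reprove this classical statement (it is quoted from \cite{Io} and \cite{Wicon}), but the sentence preceding it indicates exactly the derivation you attempt: apply Proposition~\ref{iowifam} to a family of deformations of a suitable contracted curve and sandwich $\loc(V)\subseteq E$, $\loc(V_x)\subseteq F$. The genuine gap is the unsplitness step. Because you minimize $-K_X\cdot C$ only over rational curves lying in $F$, a member of $V$ may degenerate over a \emph{different} fiber, or over another irreducible component of $\f^{-1}(\f(F))$, into pieces of anticanonical degree smaller than $\ell$: the limit cycle is connected and contracted, hence lies in a single fiber, but that fiber need not be $F$, so the minimality of $\ell$ on $F$ gives no contradiction. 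Your proposed repair --- passing to the closed subscheme of $\ratcurves$ of curves supported in $F$ --- does not close the gap: Proposition~\ref{iowifam} is stated for an irreducible component of $\ratcurves$ because its estimate ultimately rests on the lower bound $\dim_{[C]}\om(\pu,X)\ge -K_X\cdot C+\dim X$ for the \emph{full} deformation space, which a proper closed subfamily of curves confined to $F$ need not satisfy; moreover the locus of that subfamily is contained in $F$, so it cannot produce the summand $\dim E$, and its properness says nothing about properness of $V$ itself.

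The standard fix, which is what the cited proofs do, is to minimize at a point rather than on $F$: choose $x\in F$ lying in no other irreducible component of $\f^{-1}(\f(x))$ and take $C$ with $-K_X\cdot C$ minimal among rational curves through $x$ contracted by $\f$. Such a $C$ is irreducible, contracted and passes through $x$, hence (by your own argument for $\loc(V_x)\subseteq F$) lies in $F$, so $-K_X\cdot C\ge\ell$. Now any degeneration of a member of $V_x$ is a connected contracted cycle through $x$, so some component is a contracted rational curve through $x$ and has degree $\ge -K_X\cdot C$ by minimality, while every component has positive degree by $\f$-ampleness of $-K_X$; hence the cycle is irreducible and reduced. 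This yields properness of $V_x$, which is all that the estimate of \cite[IV.2.6]{Kob} actually requires, and your concluding chain of inequalities then goes through verbatim with $-K_X\cdot V=-K_X\cdot C\ge\ell$.
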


\begin{definition}\label{CF}
We define a {\em Chow family of rational curves} $\W$ to be an irreducible
component of  $\textrm{Chow}(X)$ parametrizing rational and connected 1-cycles.\\
We define $\loc(\W)$ to be the set of points of $X$ through which there is a cycle among those
parametrized by $\W$; notice that $\loc(\W)$ is a closed subset of $X$ (\cite[II.2.3]{Kob}).
We say that $\W$ is a {\em covering family} if $\loc(\W)=X$.
\end{definition}

\begin{definition}
If $V$ is a family of rational curves, the closure of the image of
$V$ in $\textrm{Chow}(X)$, denoted by $\V$, is called the {\em Chow family associated to} $V$.
\end{definition}

\begin{remark}
If $V$ is proper, {\em i.e.} if the family is unsplit, then $V$ corresponds to the normalization
of the associated Chow family $\V$.
\end{remark}

\begin{definition}
Let $\V$ be the Chow family associated to a family of rational curves $V$. We say that
$V$ (and also $\V$) is {\em quasi-unsplit} if every component of any reducible cycle in $\V$ is
numerically proportional to $V$.
\end{definition}

\begin{definition}
Let $\W$ be a Chow family of rational curves
on $X$ and $Z \subset X$. We define $\loc(\W)_Z$ to be the set of points $x \in X$ such that there exists
a cycle $\Gamma$ among those parametrized by $\W$ with $\Gamma \cap Z \not = \emptyset$ and $x \in \Gamma$.\\
We define $\cloc(\W)_Z$ to be the set of points $x \in X$ such that there exists
a chain of cycles among those parametrized by $\W$ connecting $x$ and $Z$.
Notice that, a priori $\cloc(\W)_Z$ is a countable union of closed subsets of $X$. \end{definition}

{\bf Notation}: If $T \subset X$ we will denote by $\cycx{T} \subset \cycl(X)$ the vector subspace
generated by numerical classes of curves in $T$; we will denote by  $\conx{T} \subset \cone(X)$
the subcone generated by numerical classes of curves in $T$.\\
The notation $\langle \dots \rangle$ will denote a linear subspace, while the notation
 $\langle \dots \rangle_c$ will denote a subcone.

\begin{lemma}\label{numeq}{\rm (}\cite[Proposition IV.3.13.3]{Kob}, \cite[Lemma 4.1]{ACO}{\rm )}
Let $T \subset X$ be a closed subset and let $\W$ be a Chow family of rational curves. Then every curve
contained in $\cloc(\W)_T$ is numerically equivalent to a linear combination with rational
coefficients of a curve contained in $T$ and irreducible components of cycles among those parametrized
by $\W$ which intersect $T$.
\end{lemma}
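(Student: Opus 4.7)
The proof goes by induction on chain distance from $T$. Set $T_0 = T$ and $T_{i+1} = \loc(\W)_{T_i}$, so that set-theoretically $\cloc(\W)_T = \bigcup_{i \ge 0} T_i$, with each $T_i$ closed in $X$ because the Chow scheme and the evaluation from the universal family are proper. An irreducible curve $C \subset \cloc(\W)_T$ must lie entirely inside some $T_k$: otherwise each $C \cap T_i$ is finite, and $C$ would be a countable union of finite sets, contradicting uncountability. One then inducts on the smallest such $k$, the base case $k=0$ being trivial.

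For the inductive step, fix a curve $C \subset T_{k+1}$. Every point of $C$ lies on a cycle of $\W$ that meets $T_k$, so taking the universal family $\Univ(\W) \to \W$ and its evaluation to $X$, and choosing an irreducible component whose image dominates $C$, one produces an irreducible curve $B \subset \W$ parametrising a family of cycles $\{\Gamma_b\}_{b \in B}$, each meeting $T_k$, whose total space sweeps out a closed subset of $X$ containing $C$. Normalising the restricted universal family yields a normal surface $Y$ with morphisms $Y \to B$ and $f\colon Y \to X$; all fibres of $Y \to B$ are algebraically, hence numerically, equivalent on $Y$. Pushing this equivalence forward by $f$ and separating components produces a relation
\[
a[C] \;=\; [C'] + \sum_j b_j\,[\gamma_j] \quad \text{in } \cycl(X)_{\mathbb{Q}},
\]
with $a > 0$, $C' \subset T_k$ a curve, and each $\gamma_j$ an irreducible component of a cycle of $\W$, obtained by specialising a fibre of $Y\to B$ whose $f$-image contains $C$ to a fibre whose $f$-image meets $T_k$.

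By the inductive hypothesis applied to $C' \subset T_k$, the class $[C']$ is already a rational combination of a curve in $T$ and components of cycles of $\W$ meeting $T$. For each $\gamma_j$, its ambient cycle $\Gamma_j$ meets $T_k$ at a point lying on a component $\gamma_j' \subset \Gamma_j \subset T_{k+1}$; applying the same sweeping construction to $\gamma_j'$ (or performing a joint induction on chain length together with the decomposition $[\Gamma_j] = \sum_\alpha [\gamma_{j,\alpha}]$) reduces each $[\gamma_j]$ to the allowed generators. Substituting back into the displayed relation yields the expression for $[C]$. The main obstacle throughout is the sweeping-surface step — constructing the one-parameter family of cycles, controlling the possibly non-normal total space, and ensuring that algebraic equivalence of its fibres pushes forward to a genuine numerical relation in $X$ with the claimed geometric content, including the bookkeeping of how individual components of each $\Gamma_b$ specialise. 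This is precisely the technical core of Proposition IV.3.13.3 in Koll\'ar and Lemma~4.1 in Araujo--Castravet--Occhetta that is being cited.
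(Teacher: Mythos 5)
The paper offers no proof of this lemma: it is quoted directly from \cite[IV.3.13.3]{Kob} and \cite[Lemma 4.1]{ACO}, and your write-up ultimately does the same, deferring ``the technical core'' to those references (which, incidentally, are Koll\'ar and Andreatta--Chierici--Occhetta, not Araujo--Castravet--Occhetta). So what has to be judged is your reduction to that core, and it has two genuine gaps. First, the surface step. Algebraic equivalence of the fibres of $Y\to B$, pushed forward by $f$, gives only $f_*[Y_{b_1}]\equiv f_*[Y_{b_2}]$, i.e.\ that all the cycles $\Gamma_b$ are numerically equivalent in $X$; this relation involves neither $[C]$ nor $[C']$, which are images of \emph{multisections} of $Y\to B$ rather than of fibres, and no amount of ``separating components'' extracts them from it. The relation you need compares two multisections (one over $C$, one over the incidence locus with $T_k$) and rests on the fact that $N^1$ of a surface fibred in trees of rational curves over a curve is generated by the fibre components together with any single multisection. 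That is precisely where the rationality of the cycles enters --- the statement fails for, say, a one-parameter family of elliptic curves --- and your argument never invokes it anywhere.

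Second, the induction does not close. Your one-step relation writes $[C]$ in terms of a curve $C'\subset T_k$ and components $\gamma_j$ of cycles meeting $T_k$. The inductive hypothesis disposes of $[C']$, but the $\gamma_j$ come from cycles meeting $T_k$, not $T$, and ``applying the same sweeping construction to $\gamma_j'$'' only re-expresses them through yet another curve in $T_k$ and components of further cycles meeting $T_k$: the recursion never converts ``meeting $T_k$'' into ``meeting $T$''. (Since $\W$ is irreducible, all its cycles are numerically equivalent, so the \emph{sums} $[\Gamma_j]$ are under control; the difficulty is entirely about individual components, and the proposed joint induction is not actually set up to handle it.) What your scheme proves, granting the surface lemma, is the weaker assertion in which the generators are components of cycles of $\W$ meeting $\cloc(\W)_T$. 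That weaker form is in fact all the paper ever uses --- in every application the components are forced into a fixed extremal ray or into $\langle[V]\rangle$ --- but it is not the statement as written, and your proof should either establish the stated form or record explicitly that it proves less.
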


\begin{lemma} {\rm(Cf.} \cite[Proof of Lemma 1.4.5]{BSW}, \cite[Lemma 1]{op}{\rm)} \label{numequns}
Let $T \subset X$ be a closed subset and let $V$ be a quasi-unsplit family of rational curves. Then
every curve contained in $\cloc(\V)_T$ is numerically equivalent to
a linear combination with rational coefficients
   $$\lambda C_T + \mu C_V,$$
where $C_T$ is a curve in $T$, $C_V$ is a curve among those parametrized by $V$ and $\lambda \ge 0$.
\end{lemma}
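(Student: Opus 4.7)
The plan is to bootstrap Lemma \ref{numeq} using the quasi-unsplit hypothesis. First, I would apply Lemma \ref{numeq} to the Chow family $\V$ and the closed subset $T$: every curve $C$ contained in $\cloc(\V)_T$ satisfies a rational numerical relation
$$[C] \equiv \lambda\,[C'] + \sum_{j} \mu_j\,[\gamma_j],$$
where $C' \subset T$ is a curve and each $\gamma_j$ is an irreducible component of some cycle in $\V$ that meets $T$.

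Next, I would collapse the $\V$-terms using the quasi-unsplit property. By definition, each $\gamma_j$ is numerically proportional to $[C_V]$, and since both classes are non-zero and lie in the strictly convex cone $\overline{\cone}(X)$, the proportionality factor $\alpha_j$ must be a positive rational. Setting $\mu := \sum_j \alpha_j \mu_j$, the decomposition becomes $[C] \equiv \lambda\,[C_T] + \mu\,[C_V]$ with $C_T := C' \subset T$, which already has the shape asserted by the lemma.

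The main obstacle is showing that $\lambda$ can be taken to be $\ge 0$: Lemma \ref{numeq} alone imposes no sign constraint on the coefficient of the $T$-curve. I would handle this by induction on the minimal number $n$ of cycles of $\V$ in a chain connecting $C$ to $T$. For $n = 0$ (i.e. $C \subset T$) one takes $[C] = 1 \cdot [C] + 0 \cdot [C_V]$. For $n \ge 1$, denote by $T_{n-1}$ the closed subset obtained from $T$ by $n-1$ iterations of $\loc(\V)_{\bullet}$; then $C \subset \loc(\V)_{T_{n-1}}$, and the argument of \cite[Lemma 1.4.5]{BSW} and \cite[Lemma 1]{op} --- tracking the proof of Lemma \ref{numeq} to realize the ``curve in $T_{n-1}$'' as an effective specialization of $C$ across a single $\V$-cycle meeting $T_{n-1}$ --- produces $[C] \equiv [C''] + \mu'[C_V]$ with $C'' \subset T_{n-1}$ and coefficient exactly $1$ in front of $[C'']$. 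Applying the inductive hypothesis to $C''$ gives $[C''] \equiv \lambda_0[C_T] + \mu_0[C_V]$ with $\lambda_0 \ge 0$, and substitution yields $\lambda = \lambda_0 \ge 0$ as required. The delicate point --- and the reason for following the cited references rather than invoking Lemma \ref{numeq} as a black box --- is precisely this effectivity of the specialization step, which is what ensures that the $T$-coefficient is never consumed with a negative multiplicity when the $\V$-contributions are absorbed into $\mu[C_V]$.
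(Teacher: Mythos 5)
Your proposal is correct and takes essentially the same route as the paper, which in fact prints no proof of this lemma and simply defers to the specialization arguments of \cite[Proof of Lemma 1.4.5]{BSW} and \cite[Lemma 1]{op}: decompose a curve in $\cloc(\V)_T$ step by step along the chain, use quasi-unsplitness (plus the fact that no irreducible curve is numerically trivial) to absorb all cycle components into $\mu [C_V]$, and extract $\lambda \ge 0$ from the effectivity of the one-step degeneration. The only imprecision is your claim that the coefficient in front of $[C'']$ at each step is exactly $1$; in the ruled-surface/multisection argument it is a positive rational (a ratio of degrees), but positivity is all your induction requires, so the conclusion $\lambda \ge 0$ is unaffected.
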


\begin{corollary} {\rm(Cf.} \cite[Corollary 2.2 and Remark 2.4]{CO}{\rm)}\label{locvf}
Let $\Sigma$ be an extremal face of $\cone(X)$ and denote by $F$ a fiber
of the contraction associated to $\Sigma$. Let $V$ be a quasi-unsplit family numerically independent from curves
whose numerical class is in $\Sigma$. Then
$$\conx{\cloc(\V)_F} = \langle \Sigma, [V] \rangle_c,$$
{\em i.e.} the numerical class in $X$ of a curve in $\cloc(\V)_F$
is in the subcone of $\cone(X)$ generated by $\Sigma$ and $[V]$.
\end{corollary}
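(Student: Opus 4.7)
The plan is to read the corollary off from Lemma \ref{numequns} applied with $T=F$, upgrading its numerical-equivalence decomposition to a statement about cones by exploiting two facts: every curve inside $F$ has class in $\Sigma$, and the coefficient in front of $[V]$ in that decomposition is in fact non-negative.

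Concretely, take any irreducible curve $C \subset \cloc(\V)_F$. Applying Lemma \ref{numequns} produces a rational equality in $\cycl(X)$
$$[C] = \lambda [C_F] + \mu [C_V],$$
with $C_F$ a curve in $F$, $C_V$ a curve of $V$, and $\lambda \ge 0$. Since $F$ is a fiber of the contraction $\f_\Sigma$ attached to $\Sigma$, every irreducible curve contained in $F$ is contracted by $\f_\Sigma$, so $[C_F]\in\Sigma$; and by construction $[C_V]=[V]$. Granted that $\mu \ge 0$, this places $[C]$ in $\langle\Sigma,[V]\rangle_c$, which is precisely the content of the ``i.e.'' reformulation.

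The decisive step is the sign of $\mu$. Peeking inside the proof of Lemma \ref{numequns}, the term $\mu [C_V]$ is assembled as the sum of numerical classes of the irreducible components of the cycles of $\V$ appearing in the chain connecting $C$ to $F$. Each such component is an effective rational curve, and quasi-unsplitness of $V$ forces its class to be proportional to $[V]$; the proportionality constant is strictly positive, as one sees by pairing with any ample divisor (both the component and $V$ have strictly positive degree). Summing non-negative multiples yields $\mu \ge 0$, which gives the inclusion $\conx{\cloc(\V)_F} \subseteq \langle\Sigma,[V]\rangle_c$.

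For the reverse containment I would simply observe that $F\subset\cloc(\V)_F$, so every class of a curve in $F$ already lies in $\conx{\cloc(\V)_F}\cap \Sigma$, while any irreducible component of a cycle of $\V$ meeting $F$ also lies in $\cloc(\V)_F$ and contributes the class $[V]$; the hypothesis that $V$ is numerically independent from curves in $\Sigma$ ensures that $[V]$ is a genuinely new generator rather than being absorbed into $\Sigma$. The only non-routine obstacle is the positivity $\mu\ge 0$, which requires reopening the chain-of-cycles argument behind Lemma \ref{numequns} instead of using it as a black box; everything else is essentially bookkeeping on top of that lemma and the defining property of extremal contractions.
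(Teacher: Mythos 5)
Your overall strategy --- apply Lemma \ref{numequns} with $T=F$, observe that curves in $F$ have class in $\Sigma$, and then control the sign of $\mu$ --- is the right one, and is essentially how the cited result in \cite{CO} proceeds. But your justification of the decisive step $\mu \ge 0$ does not work. The decomposition $[C]=\lambda[C_F]+\mu[C_V]$ of Lemma \ref{numequns} is \emph{not} obtained by writing $C$ as a sum of components of the cycles in a chain: $C$ is an arbitrary irreducible curve contained in the closed set $\cloc(\V)_F$ and need not be a component of any cycle of $\V$. The decomposition comes from an algebraic-equivalence argument on (the normalization of) the universal family over the relevant subfamily of $\V$, and there the coefficient of the fiber class is not sign-controlled --- this is exactly why Lemma \ref{numeq} asserts only ``rational coefficients'' and Lemma \ref{numequns} claims only $\lambda\ge 0$ while saying nothing about $\mu$. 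A ruled-surface picture shows the danger: on $\mathbb F_e$ with a multisection $\sigma$ numerically equivalent to $C_0+ef$ ($C_0$ the negative section, $f$ the fiber), the irreducible curve $C_0$ is $\sigma$ minus a positive multiple of $f$, so a curve in the swept locus can perfectly well carry $\mu<0$ when $\Sigma$ is not extremal.

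The missing ingredient is precisely the extremality of $\Sigma$, which your argument never uses to fix the sign of $\mu$. Let $H_\Sigma$ be a supporting divisor of the contraction associated to $\Sigma$: it is nef and vanishes on $\cone(X)$ exactly on $\Sigma$. Then $H_\Sigma\cdot C_F=0$, $H_\Sigma\cdot C\ge 0$ since $C$ is effective, and $H_\Sigma\cdot V>0$ because $[V]\notin\Sigma$ by the independence hypothesis; pairing the decomposition with $H_\Sigma$ gives $0\le H_\Sigma\cdot C=\mu\,(H_\Sigma\cdot V)$, hence $\mu\ge0$. With this correction the inclusion $\conx{\cloc(\V)_F}\subseteq\langle\Sigma,[V]\rangle_c$ --- which is the content actually used in the paper, as the ``i.e.''\ makes explicit --- follows as you describe, and the reverse containment is the routine observation you already make.
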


\begin{lemma}\label{efnef} Let  $D$ be an effective  divisor on $X$
and $L$ a nef divisor. If $(L+D)|_{D}$ is nef then $L+D$ is nef. 
\end{lemma}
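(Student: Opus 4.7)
The plan is to argue by contradiction, reducing nefness of $L+D$ on all of $X$ to nefness of its restriction to $D$ by showing that any curve on which $L+D$ could be negative must be contained in $D$.

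Suppose $L+D$ is not nef. Then there is an irreducible curve $C \subset X$ with $(L+D) \cdot C < 0$. Since $L$ is nef, $L \cdot C \ge 0$, which forces $D \cdot C < 0$. But a curve can have negative intersection with an effective Cartier divisor only if it is contained in the support of that divisor, so $C \subset D$. In particular, we may compute the intersection number $(L+D) \cdot C$ by restricting the line bundle $\Ol_X(L+D)$ to $D$ and intersecting with $C$ on $D$; that is,
\[
(L+D) \cdot C \;=\; (L+D)|_D \cdot C.
\]
By hypothesis $(L+D)|_D$ is nef, so the right-hand side is $\ge 0$, contradicting $(L+D) \cdot C < 0$. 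Hence $L+D$ is nef.

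The only potential subtlety is the justification that a curve $C$ with $D \cdot C < 0$ must lie in $D$, and the identification of $(L+D) \cdot C$ with $(L+D)|_D \cdot C$; both are standard facts for effective Cartier divisors, since if $C \not\subset D$ then the divisor $D$ pulls back to an effective divisor on the normalization of $C$ (giving $D \cdot C \ge 0$), and when $C \subset D$ the projection formula identifies the intersection on $X$ with the intersection of the restriction on $D$. So there is really no main obstacle: the lemma is essentially a one-line consequence of the observation that the failure of nefness of $L+D$ can only be detected by curves contained in $D$.
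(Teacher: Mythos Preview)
Your proof is correct and follows exactly the same approach as the paper: assume a curve has negative intersection with $L+D$, use nefness of $L$ to force $D\cdot C<0$ and hence $C\subset D$, then contradict the nefness of $(L+D)|_D$. Your version simply spells out in more detail the two standard facts the paper leaves implicit.
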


\begin{proof} Assume that $\gamma$ is an effective curve on $X$ such that
$(L+D) \cdot \gamma < 0$. By the nefness of $L$ we have $D \cdot \gamma <0$,
hence $\gamma \subset D$. But $L+D$ is nef on $D$, a contradiction.
\end{proof}


\section{Rationally connected fibrations}

Let $X$ be a smooth complex projective variety and let $\W$ be
a covering Chow family of rational curves.

\begin{definition}
The family $\W$ defines a relation of rational connectedness with respect
to $\W$, which we shall call {\em rc$(\W)$-relation} for short, in the following
way: $x$ and $y$ are in rc$(\W)$-relation if there exists a chain of cycles among those
parametrized by $\W$ which joins $x$ and $y$.
\end{definition}

To the rc$(\W)$-relation we can associate a fibration, at least on an open subset
(\cite{Cam81}, \cite[IV.4.16]{Kob}); we will call it {\em rc$(\W)$-fibration}.

In the notation of \cite{BCD}, by \cite[Theorem 5.9]{De}  there exists
a closed irreducible subset of $\Ch(X)$ such that, denoting by $Y$ its normalization and by
$Z \subset Y \times X$ the restriction of the universal family, we have
a commutative diagram
\begin{equation}\label{diagram}
\xymatrix@=30pt{Z \ar[r]^e \ar[d]_p & X \ar@{-->}[ld]^q \\
Y &}
\end{equation}
where $p$ is the projection onto the first factor and  $e$ is a birational morphism whose exceptional locus $E$ does not dominate $Y$.
Moreover, a general fiber of $q$ is irreducible and is a rc$(\W)$-equivalence class. \\
Let $B$ be the image of $E$ in $X$;
note that $\dim B \le \dim X-2$, as $X$ is smooth. \par
\medskip
If we consider a (covering) Chow family $\V$, associated to a quasi-unsplit dominating family $V$,
then by \cite[Proposition 1, (ii)]{BCD} $B$ is the union of all rc$(\V)$-equivalence classes
of dimension greater than $\dim X - \dim Y$.

Moreover we have the following

\setcounter{equation}{0}


\begin{lemma}\label{inB}
Let $V$ be a quasi-unsplit dominating family of rational curves on a
smooth complex projective variety $X$.
Let $B$ be the indeterminacy locus of the rc$(\V)$-fibration $q\colon \rc{X}{Y}$,
let $D$ be a very ample divisor on $q(X \setminus B)$ and let $\widehat D:=\overline{q^{-1}D}$. Then
\begin{enumerate}
\item $\widehat D \cdot V=0$;
\item if $C \not \subset B$ is a curve not numerically  proportional to $[V]$,
then $\Dcap \cdot C >0$;
\item if $\widehat D \cdot C > 0$ for every curve $C \subset B$ not numerically  proportional to $[V]$, then $[V]$ spans an extremal ray of $\cone(X)$.
\end{enumerate}
\end{lemma}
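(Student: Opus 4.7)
The plan is to use a resolution of the rational map $q$ in order to reduce parts~(1) and~(2) to a projection-formula computation, and then to deduce~(3) by showing that $\widehat D$ is a nef divisor whose numerical kernel inside $\overline{\cone}(X)$ is exactly the ray generated by $[V]$. I will work with the diagram~\eqref{diagram}: $e\colon Z \to X$ is a birational morphism with exceptional locus $E$ mapping into $B$, while $p\colon Z \to Y$ is a morphism, and $p = q \circ e$ on $Z \setminus E$. After passing to a smooth projective compactification of $Y$ and extending $D$ to an ample divisor on it, the pullback $p^{*}D$ is nef on $Z$, and a direct check shows that $e_{*}(p^{*}D) = \widehat D$ as Weil divisors, since on $X \setminus B$ both sides coincide with $q^{*}D$.

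For~(1), a general curve $C_V$ in $V$ is not contained in $B$ (as $B \subsetneq X$) and, by the defining property of the rc$(\V)$-fibration, sits inside an rc$(\V)$-equivalence class; its strict transform $\widetilde C_V \subset Z$ is therefore contracted by $p$ to a single point of $Y$, so $p^{*}D \cdot \widetilde C_V = 0$, and the projection formula together with the birationality of $e$ on $\widetilde C_V$ yields $\widehat D \cdot C_V = 0$. For~(2), let $C \not\subset B$ with $[C] \notin \mathbb{R}[V]$; Lemma~\ref{numequns} applied with $T = \{x\}$ for some $x \in C \setminus B$ shows that every curve contained in the chain-locus of a single point is numerically proportional to $[V]$, so $C$ cannot lie inside any rc$(\V)$-equivalence class. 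Hence the strict transform $\widetilde C$ of $C$ maps via $p$ to an honest curve $\gamma \subset Y$ meeting the open locus $q(X \setminus B)$, where $D$ is very ample. Thus $p^{*}D \cdot \widetilde C = \bigl(\deg p|_{\widetilde C}\bigr) \cdot D \cdot \gamma > 0$, and projection formula gives $\widehat D \cdot C > 0$.

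For~(3), combining~(1),~(2), the hypothesis, and the linearity of numerical intersection shows that $\widehat D \cdot C \ge 0$ for every irreducible curve $C$, with equality precisely when $[C] \in \mathbb{R}[V]$. Hence $\widehat D$ is a nef divisor and
\[
\widehat D^{\perp} \cap \overline{\cone}(X) \;=\; \mathbb{R}_{\ge 0}\,[V]
\]
is a one-dimensional extremal face of the closed Mori cone, so $[V]$ spans an extremal ray of $\cone(X)$. The main technical point is hidden inside~(1): one must ensure that the possibly nonzero $e$-exceptional discrepancy between $p^{*}D$ and $e^{*}\widehat D$ gives no contribution when intersected with $\widetilde C_V$. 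This is precisely where the assumption that the general $C_V$ is not contained in $B$, combined with the quasi-unsplit character of $V$ (so that any cycle of $\V$ incident to $C_V$ is numerically a multiple of $[V]$), is used to control the boundary terms.
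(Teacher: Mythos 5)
The paper gives no argument of its own for this lemma---it simply cites the proof of Proposition~1 in [BCD]---so the comparison is with that proof, whose general shape (resolve $q$ through the diagram defining $p$ and $e$, and play $p^{*}D$ against $\widehat D$) you have correctly identified. However, the two points that make that proof nontrivial are exactly the ones your write-up leaves open. The projection formula gives $\widehat D\cdot C=e^{*}\widehat D\cdot\widetilde C$, and $e^{*}\widehat D-p^{*}D$ is a combination of $e$-exceptional divisors with coefficients of no definite sign; it contributes nothing only when $\widetilde C$ is disjoint from $\Exc(e)$, i.e.\ when $C\cap B=\emptyset$. For (1) you only record that the general $C_V$ is not \emph{contained} in $B$, which does not prevent it from meeting $B$; what is needed is that the general curve of the family is \emph{disjoint} from $B$ (this does hold, because the general fiber of $q$ is an entire rc$(\V)$-equivalence class contained in $X\setminus B$ and contains the general $C_V$), and your closing sentence merely asserts that quasi-unsplitness ``controls the boundary terms'' without an argument. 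For (2) the situation is worse: $C$ is only assumed not contained in $B$, so it may meet $B$, and then positivity of $p^{*}D\cdot\widetilde C$ does not transfer to $\widehat D\cdot C$; the usual remedy is not a computation with a fixed $D$ upstairs, but moving $D$ in its linear system (legitimate because $\operatorname{codim}_X B\ge 2$, so divisors that are linearly equivalent on $X\setminus B$ have linearly equivalent closures on $X$) so that $\widehat D$ meets $C$ properly without containing it. Two further steps in (2) are also unjustified: one cannot in general ``extend $D$ to an ample divisor'' (note $Y$ is already projective; the closure of $D$ need not be nef), so positivity of $p^{*}D$ on $\gamma$, which may leave $q(X\setminus B)$, is not free; and the implication ``$C$ not inside an rc$(\V)$-class $\Rightarrow p(\widetilde C)$ is a curve'' silently uses that the support of \emph{every} cycle parametrized by $Y$---including the degenerate ones---lies in a single rc$(\V)$-class, which is true but is itself one of the nontrivial points of the cited proof, not a formality.

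For (3), the identity $\widehat D^{\perp}\cap\overline{\cone}(X)=\R_{\ge 0}[V]$ does not follow from what precedes it: positivity of $\widehat D$ on every irreducible curve whose class is off the ray says nothing about boundary classes of $\overline{\cone}(X)$ obtained as limits. Fortunately the statement only asks for extremality in $\cone(X)$: if $a+b\in\R_{\ge 0}[V]$ with $a,b\in\cone(X)$, nefness of $\widehat D$ forces $\widehat D\cdot a=\widehat D\cdot b=0$, and decomposing $a$ and $b$ into positive combinations of irreducible curve classes forces each such class to be a nonnegative multiple of $[V]$ (nonnegative because $\overline{\cone}(X)$ contains no lines). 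So (3) is correct after this one-line repair, but (1) and (2) as written have genuine gaps at the exceptional-divisor comparison and at the case of curves contracted to special points of $Y$.
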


\begin{proof}
See \cite[Proof of Proposition 1]{BCD}.
\end{proof}

\begin{corollary}\cite[Proposition 3]{BCD}. \label{incompreso}
Let $V$ be a quasi-unsplit dominating family of rational curves on a
smooth complex projective variety $X$; denote by $B$ the indeterminacy locus of the rc$(\V)$-fibration
and by $f_V$ the dimension of the general rc$(\V)$-equivalence class.\\
If $[V]$ does not span an extremal ray of $\cone(X)$, then $B$ is not empty. In particular there exist
rc$(\V)$-equivalence classes of dimension $\ge f_V +1$.
\end{corollary}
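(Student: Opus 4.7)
The plan is to argue by contrapositive, extracting the statement directly from Lemma \ref{inB} together with the description of $B$ recalled just above.

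First I would observe that Lemma \ref{inB} provides, for any quasi-unsplit dominating family $V$, a divisor $\widehat{D}$ whose behavior on curves outside $B$ is completely pinned down: parts (1) and (2) give $\widehat{D}\cdot V = 0$ and $\widehat{D}\cdot C > 0$ for every curve $C \not\subset B$ whose numerical class is not proportional to $[V]$. The only extra hypothesis needed to invoke part (3), and hence to conclude that $[V]$ spans an extremal ray of $\cone(X)$, is positivity of $\widehat{D}$ on curves contained in $B$. Therefore, if $B = \emptyset$, this hypothesis is vacuously satisfied, and Lemma \ref{inB}(3) forces $[V]$ to span an extremal ray.

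Contrapositively, if $[V]$ does not span an extremal ray, then $B \neq \emptyset$. For the second assertion, I would invoke the description of $B$ recalled in the paragraph preceding the statement: when $\V$ is the Chow family associated to a covering quasi-unsplit family $V$, $B$ is exactly the union of the rc$(\V)$-equivalence classes of dimension strictly greater than $\dim X - \dim Y = f_V$. Thus any point of $B$ lies in such a class, and $B \neq \emptyset$ immediately produces a rc$(\V)$-equivalence class of dimension $\geq f_V + 1$.

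There is no genuine obstacle in this argument: the corollary is a packaging of Lemma \ref{inB} together with the structural description of the indeterminacy locus from \cite[Proposition 1, (ii)]{BCD}, and both ingredients are available in the hypotheses. The only point to verify while writing is that the assumptions on $V$ (quasi-unsplit and dominating) are exactly those required to apply both results, which is the case.
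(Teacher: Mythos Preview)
Your argument is correct and is precisely the derivation the paper intends: the statement is presented as a corollary of Lemma~\ref{inB} (together with the description of $B$ from \cite[Proposition~1,~(ii)]{BCD} recalled just before it), with no explicit proof given beyond the citation to \cite[Proposition~3]{BCD}. Your unpacking via the contrapositive and the vacuous satisfaction of the hypothesis of Lemma~\ref{inB}(3) when $B=\emptyset$ is exactly what makes it a corollary.
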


We now give a lower bound on the dimension of $\cloc(\V)_S$, depending on the position
of the subvariety $S$ with respect to the indeterminacy locus of the
rc$(\V)$-fibration.

\begin{lemma}\label{primaopoi} Let $V$ be a quasi-unsplit dominating family of rational curves on a
smooth complex projective variety $X$; denote by $B$ the indeterminacy locus of the rc$(\V)$-fibration
and by $f_V$ the dimension of the general rc$(\V)$-equivalence class.\\
Let $S \subset X$ be an irreducible subvariety such that $[V] \not \in \conx{S}$. 
Then there exists an irreducible  $X_S$ contained in
$\cloc(\V)_S$ such that
\begin{enumerate}
\item[(1)] if $S \not\subset B$, then $\dim X_S \ge \dim S + f_V$;
\item[(2)] if $S \subset B$, then $\dim X_S \ge \dim S + f_V +1$.
\end{enumerate}
Moreover, $X_S$  is not rc$(\V)$-connected.
\end{lemma}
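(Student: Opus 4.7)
My approach is to use the rc$(\V)$-fibration $q\colon \rc{X}{Y}$ and its resolving diagram \eqref{diagram}, building $X_S$ as an irreducible component of the locus of rc$(\V)$-equivalence classes meeting $S$; throughout, write $F_s := \cloc(\V)_{\{s\}}$ for the rc$(\V)$-class of a point $s$.

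The key preliminary claim is that the hypothesis $[V]\not\in\conx{S}$ forces the assignment $s\mapsto F_s$ to be generically finite on $S$. If not, for a general $s\in S$ the intersection $S\cap F_s$ would be positive-dimensional and would contain a curve $C$ that we may choose to lie in the interior $F_s$ (avoiding the boundary $\overline{F_s}\setminus F_s$). Since $C\subset F_s=\cloc(\V)_{\{s\}}$, Lemma~\ref{numequns} applied with $T=\{s\}$ forces $[C]$ to be numerically proportional to $[V]$; but $C\subset S$, so $[V]\in\conx{S}$, a contradiction.

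In Part~(1), with $S\not\subset B$, the generic fibers of $q$ are irreducible of dimension $f_V$, and the claim gives $\dim q(S\setminus B)=\dim S$. Define $X_S$ as the irreducible component of $\overline{q^{-1}(q(S\setminus B))}$ dominating $\overline{q(S\setminus B)}$; this is well-defined and irreducible because $p\colon Z\to Y$ has irreducible general fibers. The fiber dimension theorem yields $\dim X_S \ge \dim S + f_V$, and $X_S\subseteq\cloc(\V)_S$ because every point in the open part of $X_S$ lies in an rc$(\V)$-class meeting $S$. In Part~(2), where $S\subset B$, the relevant rc$(\V)$-classes lie in $B$ and so have dimension $\ge f_V+1$, by the description of $B$ preceding Lemma~\ref{inB}. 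Set $\tilde Y := \overline{p(e^{-1}(S))}\subset Y$; the analogue of the claim yields $\dim\tilde Y\ge\dim S$. The fibers of $p$ over $\tilde Y$ are exactly the rc$(\V)$-classes meeting $S$, each of dimension $\ge f_V+1$, whence $\dim p^{-1}(\tilde Y)\ge \dim S + f_V + 1$. Since distinct rc$(\V)$-classes are disjoint, $e$ is injective on $p^{-1}(\tilde Y)$, and an irreducible component of $e(p^{-1}(\tilde Y))$ supplies the required $X_S$. Non-rc$(\V)$-connectedness of $X_S$ follows in both parts from the fact that $q(X_S)$ (respectively, the image in $\tilde Y$) is positive-dimensional, so $X_S$ meets more than one rc$(\V)$-class.

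The main obstacle is the bookkeeping around non-closedness: rc$(\V)$-classes $F_s$ are a priori only constructible, and $\cloc(\V)_S$ is only a countable union of closed subsets, so a curve in $\overline{F_s}\setminus F_s$ need not be numerically proportional to $[V]$. The choice of $C$ in the preliminary claim must therefore be made for a generic $s$ and carefully kept away from this boundary. In Part~(2), $e$ has positive-dimensional fibers over $B\supseteq S$, so controlling $\dim\tilde Y$ and ensuring that the chosen component of $e(p^{-1}(\tilde Y))$ attains the claimed dimension requires tracking the right components of $e^{-1}(S)$.
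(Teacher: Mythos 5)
Your part (1) is essentially the paper's argument transported downstairs along $q$, and it is correct in substance (modulo fixable bookkeeping: over a proper subvariety of $Y$ the fibers of $p$ need not be irreducible, so ``the'' dominating component must just be ``a'' component, and the containment of the \emph{closure} $\overline{q^{-1}(q(S\setminus B))}$ in $\cloc(\V)_S$ needs the same background facts from \cite{BCD} that the paper invokes). The genuine gap is in part (2), which is the part of the lemma that carries all the difficulty. Two of your key assertions there are false. First, ``the fibers of $p$ over $\tilde Y$ are exactly the rc$(\V)$-classes meeting $S$, each of dimension $\ge f_V+1$'': the fibers of $p$ are the supports of the cycles parametrized by $Y\subset\Ch(X)$, hence have pure dimension $f_V$; the classes contained in $B$ do have dimension $\ge f_V+1$, but precisely for that reason they are \emph{not} single fibers of $p$ --- each such class is the image under $e$ of $p^{-1}$ of a positive-dimensional subset of $Y$. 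Consequently the estimate $\dim p^{-1}(\tilde Y)\ge \dim S+f_V+1$ is unjustified; with the correct fiber dimension one only gets $\dim p^{-1}(\tilde Y)=\dim\tilde Y+f_V\ge\dim S+f_V$. Second, ``since distinct rc$(\V)$-classes are disjoint, $e$ is injective on $p^{-1}(\tilde Y)$'': disjointness of classes is irrelevant here, because distinct fibers of $p$ passing through the same point $x$ lie in the \emph{same} class; in fact $e$ contracts its exceptional locus $E$ (of codimension one in $Z$) onto $B$ (of codimension $\ge 2$ in $X$), and since $S\subset B$ the set $p^{-1}(\tilde Y)$ meets $E$ massively, so $e$ is very far from injective there --- as you yourself note in your closing paragraph, in direct contradiction with the injectivity you use.

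Once these two claims are removed, your construction only yields $\dim e(p^{-1}(\tilde Y))\le\dim\tilde Y+f_V$ with no control from below, since $e$ may (and does) contract positive-dimensional fibers over $B$. The missing idea --- and the actual content of the paper's proof of (2) --- is to control this drop: after replacing $Z_{S'}$ so that $S'$ is a section of $p$, one compares the fibers of $e$ on a suitably chosen irreducible component $\oz$ of $Z_{S'}$ with the fibers of $e|_{S'}$ (using that any curve of $Z_G\cap S'$ must be $e$-contracted because curves of $S$ are numerically independent of $[V]$, so $p(G)\subseteq p(F)$ for a component $F$ of a fiber of $e|_{S'}$), and then uses $\dim e(Z_F)>f_V$ to choose $\oz$ so that the general fiber of $e|_{\oz}$ has dimension strictly less than $\dim S'-\dim S$; this strict inequality is exactly what produces the extra $+1$ in $\dim e(\oz)>\dim S+f_V$. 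Nothing in your proposal plays this role. (Relatedly, your argument for non-rc$(\V)$-connectedness --- ``the image in $\tilde Y$ is positive-dimensional, so $X_S$ meets more than one class'' --- also fails over $B$, where many fibers of $p$ lie in one class; the correct argument is that an rc$(\V)$-connected $X_S$ would force curves of $S$, or of $S\subset\cloc(\V)_{X_S}$, to be numerically proportional to $[V]$ via Lemma~\ref{numequns}.)
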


\begin{proof} We refer to diagram (\ref{diagram}). Given any $T \subset Z$ we will set $Z_T:=p^{-1}(p(T))$. Let $S' \subset Z$ be an irreducible
component of $e^{-1}(S)$ which dominates $S$ via $e$.\\
By our assumptions on $\conx{S}$ we have that $S'$ meets any fiber of $p|_{Z_{S'}}$   in points so, up to replace $Z_{S'}$ with $S' \times_{p(S')} Z_{S'}$, we can assume that $S'$ is a section of $p|_{Z_{S'}}$.\\
Let $Z'$ be an irreducible  component of $Z_{S'}$ which contains $S'$. We have 
\begin{equation}\label{zetaesse}
\dim Z' \ge  \dim p(S') +f_V = \dim S' + f_V \geq \dim S +f_V.
\end{equation}
Moreover, notice that $S = e(S') \subset e(Z') \subset e(Z_{S'}) \subset \cloc(\V)_S$.\par
\medskip
Assume that $S \not \subset B$. Then $Z' \not \subset E$, hence the map $e|_{Z'} \colon Z' \to X$ is generically finite.
Therefore, in view of (\ref{zetaesse}), $\dim e(Z') = \dim Z' \geq \dim S + f_V$; moreover, since $S \subset e(Z')$ we have that $e(Z')$ is not rc$(\V)$-connected.\par
\medskip
Assume now that $S \subset B$. 
Assertion (2) will follow once we prove that the general fiber $G$ of $e|_{\oz}$
has dimension strictly smaller than the general fiber of $e|_{S'}$ for at least one irreducible component $\oz$ of $Z_{S'}$ which dominates $p(S')$.
In fact, recalling also (\ref{zetaesse}), in this case we will have
$$\dim e(\oz) = \dim \oz - \dim G > (\dim S' +f_V) -(\dim S'-\dim S) =f_V +\dim S.$$

\textbf{Claim.}  Let $G$ be an irreducible component of a fiber of  $e|_{Z_{S'}}$,  let $z \in G$ be any point and let $z':=p^{-1}(p(z)) \cap S'$ be the intersection of the fiber of $p$ containing $z$ with $S'$; then there exists an irreducible component $F$ of the fiber $F'$ of $e|_{S'}$ containing $z'$
such that $p(G) \subseteq p(F)$.\par
\medskip
To prove the claim, recall that, since $e(Z_G) \subset \cloc(\V)_{e(z)}$, the image via $e$ of any curve in $Z_G \cap S'$ -- which is irreducible, being a section over $p(G)$ -- must be a point,
otherwise it would be a curve contained in $S \cap \cloc(\V)_{e(z)}$, which is a contradiction, since
curves in $S$ are numerically independent from $[V]$.\\
Therefore $Z_G \cap S'$ is contained in a fiber $F'$ of $e|_{S'}$. To prove the claim we take as $F$ the irreducible component
of $F'$ containing $Z_G \cap S'$.\par
\medskip
Let $S^1 \subset S'$ be the proper closed subset on which $e|_{S'}$ is not equidimensional and let $S^2 \subset S'$ be the proper closed subset of points in which  the fiber of $e|_{S'}$ is not locally irreducible.
Recalling that $p|_{S'}$ is a finite map we see that $p\,(S^1 \cup S^2)$ is a proper closed subset of
$p(S')$.\\
Let $y \in p(S') \setminus p\,(S^1 \cup S^2)$ be a general point; in particular  there is only one irreducible component $F$ of the fiber $F'$ of $e|_{S'}$ passing through $z'= p^{-1}(y) \cap S'$ and $\dim F = \dim S' -\dim S$.\\
Notice that $\dim e(Z_F) > f_V$, otherwise a one parameter family of fibers of $p$ meeting $F$ would have the same image in $X$
 (Cf. \cite[End of proof of Proposition 1]{BCD}, where $e(Z_F)=\loc(V_{e(F)} )$).\\
 This implies that, for an irreducible component $\overline Z_F$ of $Z_F$ we have
 $\dim e(\overline{Z}_F) > f_V$.
 Taking as $\oz$ an irreducible component of $Z_{S'}$ containing $\overline{Z}_F$ we have that, for every point $z \in p^{-1}(y) \cap \oz$ and any irreducible component $G$ of  the fiber of $e|_{\oz}$ passing through $z$ we have $p(G) \subseteq p(F)$, hence
$\dim G < \dim F=\dim S' -\dim S$; the same inequality then holds for the general fiber by semicontinuity of the local dimension.\\
Noticing that $S$ is contained in $\cloc_{e(\oz)}(\V)$ the last assertion follows.
\end{proof}

\begin{remark} Both the bounds in Lemma (\ref{primaopoi}) are sharp. An example for the
second one is given by \cite[Example 2]{BCD}: in that example $B \simeq \pd \times \pu$;
taking as $S$ a fiber of the projection onto $\pd$ we have equality in $(2)$.
\end{remark}


\section{Blowing-down}

In this section we consider the following situation, which will show up in the
proof of Theorem (\ref{main}):

\begin{lemma}\label{blow}
Let $(X,H)$ be a polarized manifold  with a dominating family of rational curves $V$ such that $H \cdot V=1$.
Denote by $f_V$ the dimension of the general rc$(\V)$-equivalence class and assume that
there exists an  extremal face $\Sigma$ in $\cone(X)$ whose associate contraction
$\sigma \colon X \to X'$ is a smooth blow-up along a disjoint union of subvarieties $T_i$
of dimension $\le f_V$ such that $E_i \cdot V=0$ for every exceptional divisor~$E_i$ and $H \cdot l_i=1$ if $l_i$ is a line in a fiber of $\sigma$.
Finally denote by $V'$  a family of deformation of $\sigma(C)$, with $C$ a general curve parametrized by $V$.
Then
\begin{enumerate}
\item $-K_{X'} \cdot V' =-K_X \cdot V$;
\item there exists an ample line bundle $H'$ on $X'$ such that $H' \cdot V' =1$;
\item if $C'$ is a curve parametrized by $V'$ such that $T_i \cap C'\not=\emptyset$, then $C' \subset T_i$;
\item $\rho_{X'} > 1$;
\item  if $[V']$ spans an extremal ray of $\cone(X')$, then $[V]$ spans an extremal ray of $\cone(X)$.
\end{enumerate}
\end{lemma}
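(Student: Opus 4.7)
The plan is to verify the five assertions in order; items (1)--(4) are direct blow-up computations, and the substance of the lemma is (5).

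For (1), apply the ramification formula $K_X = \sigma^* K_{X'} + \sum_i a_i E_i$ (integer $a_i$ supported on the exceptional divisors), intersect with a $V$-curve, and use $E_i \cdot V = 0$ together with the projection formula to obtain $-K_X \cdot V = -\sigma^* K_{X'} \cdot V = -K_{X'} \cdot V'$. For (2), define $H' \in \pic(X')$ by $\sigma^* H' = H + \sum_i E_i$; this descends through $\sigma$ because $(H + \sum E_i) \cdot l_j = 1 - 1 = 0$ for each fiber $l_j$, and then $H' \cdot V' = (H + \sum E_i) \cdot V = 1$. For ampleness I would use Kleiman's criterion, combined with the observation that $H \cdot l_i = 1$ together with ampleness of $H|_{E_i}$ forces $H|_{E_i} = \mathcal{O}_{E_i}(1) + \pi_i^* L_i$ with $L_i$ ample on $T_i$, so that $H'|_{T_i} = L_i$ is ample; outside $\bigcup T_j$, positivity follows by taking strict transforms. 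For (3), if $C' \cap T_i \neq \emptyset$ but $C' \not\subset T_i$, the strict transform $\widetilde{C'}$ is an irreducible curve of $X$ meeting $E_i$, so $E_i \cdot \widetilde{C'} \geq 1$ and $H' \cdot C' = \sigma^* H' \cdot \widetilde{C'} = H \cdot \widetilde{C'} + \sum_j E_j \cdot \widetilde{C'} \geq 1 + 1 = 2$, contradicting $H' \cdot V' = 1$. For (4), $\rho_X = \rho_{X'} + k$ with $k$ the number of centers; the classes $[V], [l_1], \dots, [l_k]$ are linearly independent in $N_1(X)$ (tested against $H, E_1, \dots, E_k$), giving $\rho_X \geq k + 1$, and an additional direction is supplied by the nontriviality of the base of the rc$(\V)$-fibration attached to the dominating family $V$, upgrading this to $\rho_{X'} \geq 2$.

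The substantive step is (5). Let $L'$ be a nef divisor on $X'$ supporting $R' := \R_{\geq 0}[V']$, so that $L' \cdot V' = 0$ and $L'$ is positive on $\cone(X') \setminus R'$. The pullback $L := \sigma^* L'$ is nef on $X$ with $L \cdot V = 0$, and its null-locus in $\cone(X)$ is the face $F := \sigma_*^{-1}(R') \cap \cone(X)$, which contains both $R := \R_{\geq 0}[V]$ and the face $\Sigma = \langle [l_1], \dots, [l_k]\rangle_c$ contracted by $\sigma$. The strategy is to prove $F = R + \Sigma$; once this is done, $R$ is extremal in this simplicial sub-cone (by the independence recorded in (4)) and hence in $\cone(X)$, as $F$ is itself a face. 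For the nontrivial inclusion $F \subseteq R + \Sigma$, it suffices to classify irreducible curves $C$ with $[C] \in F$: if $C$ is contracted by $\sigma$ then $[C] \in \Sigma$; if $\sigma_*[C] = m[V']$ with $m \geq 1$, then (3) forces either $C \cap \bigcup E_j = \emptyset$ (so $[C] = [V]$ by the numerical identification via intersections with $H$, the $E_j$, and $\sigma_*$) or $C \subset E_j$ for some $j$ (giving $[C] \in R + \R_{\geq 0}[l_j]$).

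The main obstacle is extending (3) beyond the family $V'$ itself: as stated, (3) controls curves belonging to $V'$, whereas $F$ can in principle contain irreducible curves whose $\sigma$-images have $H'$-degree $m \geq 2$ and lie in $R'$. The plan for this step is a degeneration argument in the Chow scheme of $X'$: because $H' \cdot V' = 1$, any such higher-degree curve with class in $R'$ degenerates into a chain of $V'$-curves, and (3) applied component-wise yields the required incidence constraint with the centers $T_i$.
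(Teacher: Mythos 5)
There are two genuine gaps, both at points where the paper has to invoke the rc$(\V)$-machinery that your argument never uses (note that the hypothesis $\dim T_i \le f_V$ appears nowhere in your proof --- a warning sign, since without it the lemma is false). The first gap is the ampleness of $H'$ in (2). Writing $H|_{E_i}=\xi_i+\pi_i^\ast L_i$ is fine, but ampleness of $H|_{E_i}$ does \emph{not} force $L_i$ to be ample: for instance if $N_{T_i/X'}^\vee$ is ample (say $T_i=\pu$ with normal bundle $\Ol(-1)^{\oplus r}$), then $\xi_i$ is already ample and $L_i$ can be trivial, in which case $(H+\sum E_j)|_{E_i}=\pi_i^\ast L_i$ is not even positive on curves of $T_i$ and $H'$ is not ample. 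What rules this out is precisely the geometry of $V$: the paper first shows, using $E_i\cdot V=0$, Lemma (\ref{primaopoi}) and the hypothesis $\dim T_i\le f_V$, that $E_i=\cloc(\V)_F$ for a fiber $F$ of $\sigma$, hence $\conx{E_i}=\langle [V],\Sigma\rangle_c$ by Corollary (\ref{locvf}); only then is $(H+E)|_E$ nef and trivial exactly on $\Sigma$, so that $H+E=\sigma^\ast H'$ with $H'$ ample by Lemma (\ref{efnef}) and the contraction theorem. Your Kleiman-plus-strict-transform argument cannot substitute for this step.

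The second, and more serious, gap is the key step of (5). Your reduction of the face $F=\{z:\sigma_\ast z\in\R_{\ge 0}[V']\}$ to $R+\Sigma$ requires controlling irreducible curves $C$ with $\sigma_\ast[C]$ proportional to $[V']$ but of $H'$-degree $\ge 2$, and your plan is that such a curve ``degenerates into a chain of $V'$-curves because $H'\cdot V'=1$''. This is unjustified and false in general: numerical proportionality to $[V']$ gives no algebraic degeneration into members of the particular family $V'$, so (3) cannot be applied componentwise. Moreover, even granting the classification of irreducible curves, your argument only addresses $\cone(X)$ and not limit classes in $\overline{\cone}(X)$. The paper's route is different: it first proves $E\cap B=\emptyset$ (if an rc$(\V)$-class $G\subset E$ had $\dim G\ge f_V+1$, a fiber $F$ of $\sigma$ meeting $G$ would satisfy $\dim F+\dim G>\dim E$ while $\dim(F\cap G)=0$ since $[V]\notin\Sigma$), and then, taking a supporting divisor $A$ of the contraction of $[V']$, shows that the face of $\overline{\cone}(X)$ cut out by $\sigma^\ast A$ is spanned by $\Sigma$ and $[V]$, by combining the divisor $\hD$ of Lemma (\ref{inB}) with a limit argument on sequences of effective $1$-cycles (any class violating the conclusion would produce cycles $C_n$ with $\hD\cdot C_n<0$ and $E\cdot C_n>0$, hence curves of $B$ meeting $E$). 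You would need to either prove your degeneration claim (which you cannot in this generality) or replace it with an argument of this kind; items (1) and (3) of your proposal are fine, and (4) is repairable but as written (``an additional direction is supplied by the nontriviality of the base'') is only a sketch --- the paper instead notes that $\rho_{X'}=1$ would make $X'$ rc$(\V')$-connected, contradicting (3) since chains of $V'$-curves cannot leave $T$.
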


\begin{proof}
It is enough to prove the statement in case $\dim \Sigma =1$, {\em
i.e.} $\sigma\colon X \to X'$ is the blow-up of $X'$ along a smooth
subvariety $T$ associated to the extremal ray $\Sigma$. In fact,
if $\dim \Sigma >1$, the contraction of $\Sigma$ factors through
elementary contractions, each one satisfying the assumptions in
the statement.\par
\medskip
Denote by $E$ the exceptional locus of $\sigma$. Since $E \cdot V=0$ the first
assertion in the statement follows from the canonical bundle formula for blow-ups.\par
\medskip
Moreover, the fact that $E \cdot V=0$ also implies that any rc$(\V)$-equivalence class meeting
$E$ is actually contained in $E$.
Therefore, if $F$ is a non-trivial fiber of $\sigma$, then $\cloc(\V)_F \subseteq E$.
By Lemma (\ref{primaopoi})
$$\dim \cloc(\V)_F \ge f_V + \dim F \ge \dim X-1,$$
hence $E= \cloc(\V)_F$ and $\dim T=f_V$; in particular, applying  Corollary (\ref{locvf}) we get  that
 $\conx{E}=\langle [V], \Sigma \rangle_c$.\par
 \medskip
The line bundle $(H +E)|_{E}$ is nef and it is trivial only on $\Sigma$,
since  $(H +E) \cdot \Sigma=0$ and $(H +E) \cdot V=1$. Then $H+E$ is nef
by Lemma (\ref{efnef}).\\
Notice also that $H+E$ is trivial only on $\Sigma$.
Indeed, let $\gamma$ be an effective curve on $X$ such that $(H+E) \cdot \gamma = 0$.
Due to the ampleness of $H$ we have $E \cdot \gamma <0$, hence $\gamma \subset E$.
This implies that $[\gamma] \in \Sigma$.
Therefore $H+E = \sigma^\ast H'$, with $H'$ an ample line bundle on $X'$.
By the projection formula $H' \cdot V' =1$, hence part (2) in the statement is proved.\par
\medskip
Now, let $C'$ be a curve parametrized by $V'$ meeting $T$ and assume by contradiction that $C'$ is not contained in $T$;
denote by $\tl C'$ its strict transform. Then
$$1 = H' \cdot C' = \sigma^\ast H' \cdot \tl C' = (H+E) \cdot \tl C' \geq 2,$$
which is a contradiction.
It follows that every curve parametrized by $V'$ which meets $T$ is contained in it;
so we get part (3) in the statement.\par
\medskip
As to part (4), assume by contradiction that $\rho_{X'} =1$. This implies that
$X'$ is rc$(\V')$-connected, but this is impossible as,
in view of part (3),
we cannot join points of $T$ and points outside of $T$ with curves parametrized by $V'$.\par
\medskip
Finally, to prove part (5) assume  that $[V']$ spans an extremal ray of $X'$ and let
$B$ be the indeterminacy locus of the rc$(\V)$-fibration. We claim that $E \cap B = \emptyset$.\par
\smallskip
Assume by contradiction that this is not the case; then
$E$ meets (and hence contains) an rc$(\V)$-equivalence class $G$ of dimension $\dim G \ge f_V+1$.
Take a fiber $F$ of $\sigma$ meeting $G$. Then
$\dim F + \dim G > \dim E$. On the other hand, $\dim (F \cap G)=0$ as $[V] \not \in \Sigma$.
So we get a contradiction.\par
\smallskip
Let $A$ be a supporting divisor
of the contraction associated to $[V']$. The pull-back $\sigma^\ast A$ defines a two-dimensional
face $\Pi$ of $\overline{\cone}(X)$ containing $\Sigma$ and $[V]$. Let $\hD$ be as in Lemma (\ref{inB}); by the same lemma  $\hD \cdot \Sigma >0$ and $\hD \cdot V=0$.\\
Assume that $\Pi$ is not spanned by $\Sigma$ and $[V]$; in this case
there exists a class $c \in \overline{\cone}(X)$ belonging to $\Pi$ such that $E \cdot c >0$ and $\hD \cdot c <0$.\\
Let $\{C_n\}$ be a sequence of effective one cycles such that the limit of $\R_+[C_n]$ is $\R_+c$; by continuity, for some $n_0$ we have $E \cdot C_{n} >0$ and  $\hD \cdot C_{n} <0$ for $n \ge n_0$,
hence  $C_{n} \subset B$, and $E \cap C_{n} \not = \emptyset$ for $n \ge n_0$, contradicting $E \cap B = \emptyset$.
\end{proof}


\section{Main theorem}
\setcounter{equation}{0}

First of all we consider polarized manifolds $(X,H)$ with a quasi-unsplit dominating
family of rational curves $V$ proving that if, for $m$ large enough, the adjoint divisor
$K_X+mH$ defines an extremal face containing $[V]$, then $[V]$ spans an extremal
ray of $X$.

\begin{proposition}\label{ivan} Let $(X,H)$ be a polarized manifold
which admits a quasi-unsplit dominating family of
rational curves $V$; denote by $f_V$ the dimension of a general rc$(\V)$-equivalence class.\\
If, for some integer $m$ such that $m +f_V \ge \dim X-3$, the divisor
$K_X +mH$ is nef and it is trivial on $[V]$, then $[V]$ spans an extremal ray of $\cone(X)$.
\end{proposition}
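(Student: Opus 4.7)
The plan is to argue by contradiction: suppose $[V]$ does not span an extremal ray of $\cone(X)$. By Corollary~\ref{incompreso}, the indeterminacy locus $B$ of the rc$(\V)$-fibration is non-empty; combining with the contrapositive of Lemma~\ref{inB}~(3) I obtain an irreducible curve $C \subset B$ whose class is not numerically proportional to $[V]$ and which satisfies $\widehat D \cdot C \le 0$. Applying Lemma~\ref{primaopoi}~(2) to $S = C$ (legitimate since $[V] \notin \conx{C}$) produces an irreducible subvariety $X_C \subseteq \cloc(\V)_C$ with $\dim X_C \ge f_V + 2$, hence $\dim X_C \ge \dim X - m - 1$ by the hypothesis on $m + f_V$. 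By Lemma~\ref{numequns}, every curve in $X_C$ has class of the form $\lambda[C] + \mu[V]$ with $\lambda \ge 0$, so $\conX{X_C} \subseteq \langle [C], [V] \rangle_c$.

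Next I would force $X_C \subseteq B$. For any irreducible curve $C' \subset X_C$ whose class is not proportional to $[V]$ --- writing $[C'] = \lambda[C] + \mu[V]$ with $\lambda > 0$ --- Lemma~\ref{inB}~(1) together with the choice of $C$ gives $\widehat D \cdot C' = \lambda\,\widehat D \cdot C \le 0$. If $C'$ lay outside $B$, Lemma~\ref{inB}~(2) would instead force $\widehat D \cdot C' > 0$, a contradiction. Since such curves $C'$ sweep a dense open subset of $X_C$ and $X_C$ is irreducible, we conclude $X_C \subseteq B$, whence $\dim X_C \le \dim B \le \dim X - 2$.

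For $m = 0$ this already contradicts $\dim X_C \ge \dim X - 1$, which is essentially the route of \cite[Theorem~2]{BCD}. For $m \ge 1$ the residual gain of $m$ must come from the adjoint nefness: $(K_X + mH) \cdot V = 0$ forces $-K_X \cdot V = m\,H\cdot V \ge m$, so by Proposition~\ref{iowifam}~(a) we have $\dim \loc(V_x) \ge m - 1$ at every point, with all such loci contained in the rc$(\V)$-class of $x$. The main obstacle is converting this into a strict improvement of the dimension estimate on $X_C$ (or on the enclosing rc$(\V)$-equivalence class) to something strictly larger than $\dim X - 2$: one needs a refined construction, built on Lemma~\ref{primaopoi}, that stacks the $V$-direction transversally onto the dimension already secured in the first application, thereby exceeding the bound $\dim B \le \dim X - 2$. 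Making this transversality rigorous, via the interplay between the nef adjoint divisor $K_X + mH$ and the internal structure of the rc$(\V)$-class containing $X_C$, is the technical heart of the argument and the point at which the hypothesis $m + f_V \ge \dim X - 3$ must be used in its full strength.
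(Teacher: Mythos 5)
There is a genuine gap, and you in fact acknowledge it yourself: your argument is complete only when $f_V+2\ge \dim X-1$ (essentially the case $m\le 0$, i.e.\ the situation of \cite[Theorem 2]{BCD}), while for $m\ge 1$ the curve $C\subset B$ only yields $\dim X_C\ge f_V+2$, which is compatible with $\dim B\le \dim X-2$, so no contradiction arises. The ``transversal stacking of the $V$-direction'' you invoke to gain the remaining $m$ dimensions is not an argument, and it is not how the extra hypothesis enters. The missing idea is contraction-theoretic rather than deformation-theoretic: since $K_X+mH$ is nef and trivial on $[V]$, if $[V]$ does not span an extremal ray then $K_X+mH$ supports an extremal face $\Sigma$ of dimension at least two containing $[V]$, and by \cite[Lemma 7.2]{NO2} there is an extremal ray $\vt\subset\Sigma$ whose exceptional locus is contained in $B$. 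Because $\dim B\le \dim X-2$ the contraction of $\vt$ is small, and $(K_X+mH)\cdot\vt=0$ gives $\ell(\vt)\ge m$, so Proposition (\ref{fiberlocus}) produces a non-trivial fiber $F$ with $\dim F\ge m+1$. The decisive move is then to apply part (2) of Lemma (\ref{primaopoi}) to this high-dimensional $F\subset B$ (not to a curve in $B$): this gives $\dim\cloc(\V)_F\ge \dim F+f_V+1\ge m+f_V+2\ge \dim X-1$, while $\cloc(\V)_F\subset B$ (rc$(\V)$-classes are contained in $B$ or disjoint from it), contradicting $\dim B\le \dim X-2$. Without an input of this kind --- a subvariety of $B$ of dimension at least $m+1$ to feed into Lemma (\ref{primaopoi}) --- your dimension count cannot reach the needed bound.

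Two smaller remarks. First, your derivation of $X_C\subseteq B$ via curves ``sweeping a dense open subset'' of $X_C$ is shaky (nothing guarantees that curves not proportional to $[V]$ cover $X_C$); the clean argument is the one just quoted: $\cloc(\V)_C$ is a union of rc$(\V)$-equivalence classes meeting $B$, hence lies in $B$. Second, note that in the intended generality $m$ can be large (in the main theorem $m\sim\dim X/2$), so the deficit you leave open is not a borderline case but the heart of the statement.
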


\begin{proof}
Assume by contradiction that $[V]$ does not span an extremal ray in $\cone(X)$.\\
This implies that $K_X+mH$ defines an extremal face $\Sigma$ of dimension at least two, containing $[V]$.
By \cite[Lemma 7.2]{NO2} there exists an extremal ray $\vt \in \Sigma$
whose exceptional locus is contained in the indeterminacy locus $B$ of the rc$(\V)$-fibration.
Since $(K_X +mH)\cdot \vt = 0$, the length $\ell(\vt)$ is greater than or equal to $m$.\\
Let $F$ be a non-trivial fiber of the  contraction associated to $\vt$; since this contraction  is small, being $\dim B \le \dim X-2$,
then $\dim F \ge m+1$ by Proposition (\ref{fiberlocus}).\\
By part $(2)$ of Lemma (\ref{primaopoi}), the dimension of $\cloc(\V)_{F}$ is
$$\dim \cloc(\V)_{F} \ge \dim F + f_V +1.$$
As the rc$(\V)$-equivalence classes are either contained in $B$ or have empty intersection with it,
$\cloc(\V)_{F} \subset B$.
Therefore we get
$$\dim X-2 \ge \dim B \ge \dim \cloc(\V)_{F} \ge f_V + m +2 \ge \dim X-1,$$
which is a contradiction.
\end{proof}

As the last preparatory step, we consider the following special case.

\begin{lemma}\label{premain}
Let $V$ be a quasi-unsplit dominating family of rational curves on a
smooth complex projective variety $X$.
Denote by $f_V$ the dimension of a general rc$(\V)$-equivalence class.
Assume that there exists an extremal ray $\vt$, independent from $[V]$, whose associated contraction
has a fiber $F$ such that $\dim F +f_V \ge \dim X$.
Then $\dim F +f_V = \dim X$ and
$\cone(X)= \langle [V], \vt \rangle_c$. In particular $\rho_X=2$.
\end{lemma}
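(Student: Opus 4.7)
The plan is to apply Lemma~\ref{primaopoi} with $S=F$ and then extract the structure of $\cone(X)$ from Corollary~\ref{locvf}. The hypothesis $[V]\notin\conx{F}$ required by Lemma~\ref{primaopoi} is immediate: every curve in the fiber $F$ of the contraction of $\vt$ is numerically proportional to $\vt$, while $\vt$ and $[V]$ are assumed numerically independent.

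The crux is a dimensional dichotomy. Let $B$ denote the indeterminacy locus of the rc$(\V)$-fibration. If $F\subset B$ held, part~(2) of Lemma~\ref{primaopoi} would produce an irreducible $X_F\subseteq \cloc(\V)_F \subseteq X$ with
$$\dim X_F \ge \dim F + f_V + 1 \ge \dim X + 1,$$
which is impossible; so $F\not\subset B$. Part~(1) of the same lemma then yields an irreducible $X_F\subseteq \cloc(\V)_F$ satisfying
$$\dim X \ge \dim X_F \ge \dim F + f_V \ge \dim X.$$
Equality must hold throughout: this gives the first assertion $\dim F + f_V = \dim X$ and simultaneously forces $X_F = X$, since $X$ is irreducible of the same dimension. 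In particular $\cloc(\V)_F = X$.

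To conclude, I would invoke Corollary~\ref{locvf}: the ray $\vt$ is a one-dimensional extremal face and $V$ is numerically independent from curves of class in $\vt$, so
$$\cone(X) = \conx{\cloc(\V)_F} = \langle \vt, [V] \rangle_c.$$
Since $\vt$ and $[V]$ are numerically independent, this cone is genuinely two-dimensional, hence $\rho_X = 2$. The only subtle point is the inference $X_F = X \Rightarrow \cloc(\V)_F = X$: a priori $\cloc(\V)_F$ is only a countable union of closed sets, but $X_F$ is an irreducible closed subset of the irreducible variety $X$ with $\dim X_F = \dim X$, which forces $X_F = X$, and this suffices. Beyond this, the proof is a direct two-step application of Lemma~\ref{primaopoi} followed by Corollary~\ref{locvf}, with the hypothesis $\dim F + f_V \ge \dim X$ landing exactly at the threshold that rules out case~(2) and saturates case~(1).
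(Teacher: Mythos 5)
Your proposal is correct and follows essentially the same route as the paper: apply Lemma~\ref{primaopoi} to $S=F$ to get $\dim X\ge\dim\cloc(\V)_F\ge f_V+\dim F\ge\dim X$, conclude $\cloc(\V)_F=X$ and $\dim F+f_V=\dim X$, and finish with Corollary~\ref{locvf}. Your explicit exclusion of the case $F\subset B$ via part~(2) is just a slightly more careful spelling-out of what the paper leaves implicit.
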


\begin{proof}
By part (1) of Lemma (\ref{primaopoi}) we have
$$\dim X \ge \dim \cloc(\V)_F \ge f_V + \dim F,$$
hence $\dim F +f_V = \dim X$ and $\cloc(\V)_F =X$;
so the assertion follows by Corollary~(\ref{locvf}).
\end{proof}

\begin{theorem}\label{main}
Let $(X,H)$ be a polarized manifold  with a dominating family
of rational curves $V$ such that $H \cdot V=1$.
If $-K_X \cdot V \ge \frac{\dim X - 1}{2}$, then $[V]$ spans an extremal ray of $\cone(X)$.
\end{theorem}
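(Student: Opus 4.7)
The plan is to apply Proposition \ref{ivan} and, to that end, to first reduce the problem to showing that $K_X + mH$ is nef, where $m := -K_X \cdot V$. Set $n := \dim X$. Because $H$ is ample and $H \cdot V = 1$, no reducible cycle can belong to $\V$ (its irreducible components would have positive $H$-degrees summing to $1$), so $V$ is unsplit; being dominating, it is therefore covering. By Proposition \ref{iowifam}(b), $f_V \geq m - 1$, hence $m + f_V \geq 2m - 1 \geq n - 2 \geq n - 3$, so the numerical hypothesis of Proposition \ref{ivan} is met. Since $(K_X + mH) \cdot V = 0$ by construction, it will remain to prove nefness of $K_X + mH$; equivalently, to show that $[V]$ is extremal even in the case in which this nefness fails.

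Suppose $K_X + mH$ is not nef, let $R$ be an extremal ray of $\cone(X)$ on which it is negative, and let $W$ be an unsplit family of minimal rational curves generating $R$. Then $-K_X \cdot W \geq m + 1$, and combining with $-K_X \cdot W \leq n + 1$ (bend-and-break) together with $m \geq (n-1)/2$ forces $H \cdot W = 1$ except in the boundary regime $-K_X \cdot W \in \{n, n+1\}$, which the classification of Fano manifolds of large pseudo-index reduces to $\rho_X = 1$, making $[V]$ trivially extremal. One may therefore assume $H \cdot W = 1$ and $[V] \neq [W]$, so $[V] \notin \conx{F}$ for any non-trivial fiber $F$ of the contraction $\sigma \colon X \to X'$ of $R$. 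Proposition \ref{fiberlocus} gives $\dim F \geq m + \delta$, with $\delta \in \{0, 1, 2\}$ according as $\sigma$ is of fiber type, divisorial, or small. Applying Lemma \ref{primaopoi} to $S = F$ produces an irreducible $X_F \subseteq \cloc(\V)_F$ of dimension at least $2m + \delta - 1$; the alternative $F \subset B$ is immediately ruled out, since it would entail $X_F \subseteq B$ with $\dim X_F \geq 2m + \delta > n - 2 \geq \dim B$.

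In the small case ($\delta = 2$), $\dim X_F \geq n$ forces $X_F = X$, so by Corollary \ref{locvf} $\cone(X) = \langle [V], R \rangle_c$, and $[V]$ is extremal. In the divisorial case ($\delta = 1$), if $E \cdot V > 0$ (where $E$ denotes the exceptional divisor) then $V$-chains from $F$ leave $E$ and $X_F = X$, reducing to the previous case; if instead $E \cdot V = 0$, the plan is to verify the remaining hypotheses of Lemma \ref{blow} --- smoothness of $\sigma$ as a blow-up along a smooth center of dimension at most $f_V$, which follows from the large length $\ell(R) \geq m + 1 \geq (n+1)/2$ via the classical classification of elementary divisorial contractions, and $H \cdot l = 1$ for lines in fibers (a consequence of $H \cdot W = 1$) --- and then to invoke Lemma \ref{blow} to pass to a polarized manifold $(X', H')$ of strictly smaller Picard number, concluding by induction on $\rho_X$. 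The main obstacle will be the fiber-type case ($\delta = 0$), in which $\dim X_F \geq 2m - 1 \geq n - 2$ is insufficient to force $X_F = X$: here I would exploit Lemma \ref{inB}(3), verifying that $\widehat D \cdot C > 0$ for every curve $C \subset B$ not numerically proportional to $[V]$ by combining the two covering unsplit families $V$ and $W$ through iterated $\cloc$ applications in the style of Lemma \ref{numequns}, so as to show that every such curve lies, up to numerical equivalence, in the two-dimensional cone $\langle [V], [W] \rangle_c$ and is controlled by the rc$(\V)$-fibration. The delicate step will be the simultaneous bookkeeping of the indeterminacy locus $B$ and of the curves in $B$ that are numerically proportional to $[W]$ but not to $[V]$.
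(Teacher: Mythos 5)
Your opening reduction (if $K_X+mH$ is nef, conclude by Proposition~\ref{ivan}; otherwise study a ray $R$ on which it is negative) follows the paper, but the proposal breaks down exactly where the paper's proof has to work hardest. First, your dismissal of the case $H\cdot W\ge 2$ via ``the classification of Fano manifolds of large pseudo-index reduces to $\rho_X=1$'' is not available: $X$ is not assumed Fano, and an extremal ray of length $n=\dim X$ does not force $\rho_X=1$ (think of a $\proj^{n-1}$-bundle over a curve, where the ray of lines in the fibers has length $n$ and $\rho_X=2$). The paper obtains $H\cdot C_\vartheta=1$ differently: Lemma~\ref{premain} lets one assume every non-trivial fiber of $\f_\vartheta$ has dimension $\le m+1$, and then $H\cdot C_\vartheta\ge 2$ would give $\ell(\vartheta)\ge 2m+1$ and fibers of dimension $\ge 2m>m+1$ by Proposition~\ref{fiberlocus}. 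Second, in your divisorial case the claim ``$E\cdot V>0$ implies $X_F=X$'' is unjustified: positivity of $E$ on $V$ says nothing about chains emanating from the fixed fiber $F$; the correct dichotomy is on $\overline X_F\cdot V$ for a divisorial component $\overline X_F$ of $\cloc(\V)_F$, and even then one needs $\rho_X=2$ (or, for $\rho_X>2$, the nefness of $K_X+(m+1)H$ and the analysis of its face $\Sigma$) to conclude, which is how the paper structures cases (a1) and (b2).

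The decisive gap, however, is the fiber-type case, which you yourself leave as a plan. Showing $\widehat D\cdot C>0$ for curves $C\subset B$ not proportional to $[V]$ cannot be done by ``iterated $\cloc$ applications'' alone: Lemmas~\ref{numeq} and \ref{numequns} only bound numerical classes and yield at best $\rho_X\le 3$, not the sign of $\widehat D$ on $B$. In the paper this case carries the real content: for $\rho_X=2$ with equidimensional $m$-dimensional fibers one uses Fujita's result to write $X=\proj_Y(\E)$, the Kobayashi--Ochiai theorem (or \cite[Proposition 1.2]{AWinv} when $r_Y=m$) to identify $Y$ as $\proj^{m+1}$, $\quadr^{m+1}$ or to split off $\proj^m\times Y$, then chooses a line $l\subset Y$ avoiding the images of the rc$(\V)$-classes in $X_C$ and analyzes the splitting type of $\E|_l$ to produce a curve on which $\widehat D$ must be both nonnegative and negative; for $\rho_X>2$ one first proves $K_X+(m+1)H$ is nef and, in case (b1), needs the limiting argument with sequences of cycles and half-spaces to show the face $\Pi$ is spanned by $[V]$ and $[W_\varrho]$. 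None of these ingredients appears in your sketch, so as it stands the proposal does not prove the theorem.
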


\begin{proof}
Let $B$ be the indeterminacy locus of the rc$(\V)$-fibration $q\colon \rc{X}{Y}$,
let $D$ be a very ample divisor on $q(X \setminus B)$ and let $\widehat D:=\overline{q^{-1}D}$.
Denote by $m$ the anticanonical degree of $V$ and by
$f_V$ the dimension of a general rc$(\V)$-equivalence class. Notice that, since
$V$ is a dominating family, we have $m \ge 2$.\\
By Proposition (\ref{iowifam}) $\dim \loc(V_x) \ge -K_X \cdot V -1 =m-1$; since
a general fiber of the rc$(\V)$-fibration contains $\loc(V_x)$ for every point $x$ in it,
we have $f_V \ge m-1$.\par
\medskip
If $K_X +mH$ is nef, then the assertion follows by Proposition (\ref{ivan});
therefore we can assume that $K_X+mH$ is not nef.\\
Let $\vt$ be an extremal ray such that $(K_X+mH) \cdot \vt <0$ and let $\f_\vt$ be the associated contraction.
Notice that $\vt$ has length $\ell(\vt)\ge m+1$, hence every non-trivial fiber of
$\f_\vt$ has dimension $\ge m$ by Proposition (\ref{fiberlocus}). On the other hand, by Lemma (\ref{premain})
we can confine to assume that all fibers of $\f_\vt$ have dimension $\le m+1$.\\
In particular this implies that, denoted by $C_\vt$ a minimal degree curve whose numerical class belongs to $\vt$, we have $H \cdot C_\vt =1$. Indeed, if this were not the case, we would have 
$\ell(\vt) \ge 2m +1$, hence every non-trivial fiber of
$\f_\vt$ would have dimension $\ge 2m > m+1$, by Proposition (\ref{fiberlocus}) 
and the fact that $m\geq 2$.\par
\medskip
If the Picard number of $X$ is one the theorem is clearly true, so we can assume that $\rho_X \ge 2$.
Now we split up the proof in two cases, according to the value of $\rho_X$:
first we consider the case $\rho_X=2$ and then the general one.\par
\medskip
{ \bf\em  Case (a)}\quad $\rho_X=2$. \par
\medskip
The proof is based on different arguments, depending on the dimension of the fibers of the contraction associated
to the extremal ray $\vt$.\par
\medskip
{ \bf\em  Case (a1)}\quad The contraction $\f_\vt$ admits an $(m+1)$-dimensional fiber $F$. \par
\medskip
Consider $X_F:=\cloc(\V)_F$. We have, by Corollary (\ref{locvf}), that $\conx{X_F}= \langle [V],\vt\rangle_c$ and,
by Lemma (\ref{primaopoi}), that
$$\dim X_F \ge \dim F + f_V  \ge (m+1)+(m-1)\geq \dim X -1.$$
If $X_F=X$, then the statement is proved. So we can assume that an irreducible component 
$\overline{X}_F$ of $X_F$ is a divisor and thus that $f_V=m-1$.
Notice that $\overline{X}_F \cdot V=0$, otherwise we would have $X_F=X$.\\
Consider now the intersection number of ${X}_F$ with curves whose numerical class belongs to $\vt$; since $\rho_X=2$
and $\overline{X}_F  \cdot V=0$ we cannot have also $\overline{X}_F  \cdot \vt=0$.\par
\smallskip
Let us show that we cannot have $\overline{X}_F  \cdot \vt < 0$, too.\\
Assume by contradiction that this is the case. Then $\Exc(\vt) \subset \overline{X}_F $, so $\f_\vt$ 
is divisorial by Proposition (\ref{fiberlocus}).
By the same proposition, recalling that we are assuming that all the fibers of $\f_\vt$
have dimension $\le m+1$, every non-trivial fiber has dimension $m+1$.\\
Then $\f_\vt$ is the blow-up of a smooth variety $X'$ along a smooth center $T$ by \cite[Theorem 4.1 (iii)]{AWduke}.
The dimension of the center is 
$$\dim T=(n-1) - (m+1) \le m-1=f_V.$$
We can thus apply part (4) of Lemma
(\ref{blow}) and we get $\rho_X = \rho_{X'} +1 >2$,  reaching a contradiction.\par
\smallskip
Therefore $\overline{X}_F \cdot \vt > 0$, hence $(\overline{X}_F)|_{\overline{X}_F}$ is nef and thus,
by Lemma (\ref{efnef}), $\overline{X}_F$ is nef. 
As $\overline{X}_F \cdot V=0$ and $\rho_X=2$, $\overline{X}_F$ is the supporting
divisor of an elementary contraction of $X$ whose associated
extremal ray is spanned by $[V]$.\par
\medskip
{ \bf\em  Case (a2)}\quad The contraction $\f_\vt$ is equidimensional with $m$-dimensional fibers. \par
\medskip
By Proposition (\ref{fiberlocus}), $\f_\vt$ is of fiber type and $\ell(\vt)=m+1$. Hence,
by \cite[Lemma 2.12]{Fuj4}, $X$ is a projective bundle over a
smooth variety $Y$, {\em i.e.}
 $X=\proj_Y(\E)$, where $\E=(\f_\vt)_\ast H$.\\
Notice that $Y$ has Picard number one and is covered by
rational curves -- the images of the curves parametrized by $V$ -- therefore
$Y$ is a Fano manifold.\\
By the canonical bundle formula for projective bundles we have
$$K_X +(m+1)H=\f_\vt^\ast(K_Y +\det \E).$$
In particular, if $C_V$ is a curve among those parametrized by $V$, by the projection formula we can compute
$$(K_Y +\det \E)\cdot (\f_\vt)_\ast(C_V)= (K_X +(m+1)H) \cdot C_V=1.$$
It follows that $(K_Y +\det \E) \cdot \f_\vt(C_V)=1$ and  that $K_Y +\det \E$ is the ample generator of $\pic(Y)$.
The ampleness of $\E$ implies that $\det \E \cdot \f_\vt(C_V) \ge m+1$; therefore $-K_Y \cdot \f_\vt(C_V) \ge m$, hence
the index $r_Y$ of $Y$ is greater than or equal to $m$.\par
\medskip
If $r_Y=m$, denoted by $l$ a rational curve of minimal degree in $Y$,
then $\det \E \cdot l =m+1$; moreover, the splitting type of $\E$, which is ample and of rank $m+1$,
on rational curves of minimal degree is uniform of type $(1,\dots,1)$.\\
We can thus apply \cite[Proposition 1.2]{AWinv}, so we obtain that $X \simeq \proj^m \times Y$.
It follows that the curves of $V$ are contained in the fibers of the first projection
and that $[V]$ spans an extremal ray.\par
\medskip
Therefore we are left with $r_Y \ge m +1$. Recalling that $\dim Y = \dim X - m \le m+1$,
by the Kobayashi--Ochiai Theorem (\cite{KOc})
we get that $Y$ is  a projective space or a hyperquadric.\par
\smallskip

Assume by contradiction that $[V]$ does not span an extremal ray of $X$.\\
By part (3) of Lemma (\ref{inB}) there exists a curve $C \subset B$, whose numerical class is not
proportional to $[V]$, such that $\Dcap \cdot C \le 0$. Actually, since $\rho_X=2$ and $\Dcap \cdot V=0$,
we have $\Dcap \cdot C <0$.\\
By part (2) of Lemma (\ref{primaopoi}), there exists  $X_C \subset \cloc(\V)_C$ which is not  rc$(\V)$-connected  such that $\dim X_C \ge f_V+ \dim C +1 \ge m+1$. \\
By Lemma (\ref{numequns}) $\Dcap$ has non positive intersection number with every curve in $X_C$ and
it is trivial only on curves which are numerically proportional to $[V]$.\\
Since $\Dcap \cdot \vt >0$, we have that $\f_\vt$ does not contract curves in $X_C$,
hence $\dim Y \ge \dim X_C \ge m+1$ and so $\dim Y = \dim X_C = m+1$.\\
Since $X_C$ is not  rc$(\V)$-connected, for every point $c$ of $X_C$, the intersection $X_c$ of the rc$(\V)$-equivalence class containing $c$ with $X_C$ has dimension $= m$. In particular $X_C$ is the union of a one parameter family of rc$(\V)$-connected
subvarieties $X_c$.\par
\smallskip
We claim that there exists a line $l$ in $Y$ which is not contained in 
$\f_\vt(X_c)$ for any $c \in C$.
Notice that, since $\f_\vt$ does not contract curves in $X_C$, through a general point $y$ in $Y$
there is a finite number of such subvarieties.\\
If $Y \simeq \proj^{m+1}$, a line joining $y$ with a point outside the union of these
subvarieties has the required property.\\
Assume now that $Y \simeq \quadr^{m+1}$; for any  $y \in  \quadr^{m+1}$ the locus of the lines through
$y$ is a quadric cone $\mathbb {Q}^{m}_y$ with vertex $y$. Therefore, if every line through $y$ is contained in $\f_\vt(X_c)$ for some $c \in C$, then $\mathbb {Q}^{m}_y$ is an irreducible component of  
$\f_\vt(X_c)$; since $X_c$ moves in a one-dimensional family, for the general point $y \in \quadr^{m+1}$, the general line through $y$ has the required property.\par
\smallskip
The splitting type of $\E$ on this line
is one of the following: $(2, 1, \dots, 1)$ if $Y \simeq \mathbb Q^{m+1}$ and
either $(3, 1, \dots, 1)$ or $(2, 2, 1, \dots, 1)$ if $Y \simeq \proj^{m+1}$.
Recalling that $m \ge 2$ we have that, among the summands of $\E_{l}$ there is at
least one $\Ol_\pu(1)$.\\
Consider $\proj_l(\E|_{l})$; its cone of curves is generated by the class
of a line in a fiber of the projection onto $l$ and the class of a minimal section
$C_0$. By the discussion above we have that $H \cdot C_0 = 1$. Moreover,
$\f_\vt^\ast (K_Y+\det \E) \cdot C_0 =1$, hence $[C_0]=[V]$;
in particular $\Dcap$ is nef on $\proj_l(\E|_{l})$.\\
Consider an irreducible curve in $\proj_l(\E|_{l}) \cap X_C$; by our choice of $l$, this
curve is not contained in a rc$(\V)$-equivalence class contained in $X_C$, so
it is negative with respect to
$\Dcap$, a contradiction. The case $\rho_X=2$ is thus completed.\par
\medskip
{ \bf\em  Case (b)}\quad $\rho_X > 2$. \par
\medskip
Notice that, in view of Corollary (\ref{incompreso}), we can confine to
assume that $B \not = \emptyset$; moreover, by 
part (3) of Lemma (\ref{inB}), we can also assume the existence of a curve $C \subset B$
such that $[C]$ is not proportional to $[V]$ and $\Dcap \cdot C \le 0$.\par
\medskip
We claim that $K_X+(m+1)H$ is nef.\\
Assume by contradiction that $K_X+(m+1)H$ is not nef. Let $\tau$ be a ray such that $(K_X+(m+1)H) \cdot \tau <0$,
denote by $C_\tau$ a rational curve of minimal 
anticanonical degree in $\tau$ and by $\f_\tau$ the contraction associated to $\tau$.\\
Notice that $\tau$ has length $\ell(\tau)\ge m+2$, hence every non-trivial fiber of
$\f_\tau$ has dimension $\ge m+1$ by Proposition (\ref{fiberlocus}).\\
On the other hand  $\f_\tau$ cannot have fibers of dimension $>m+1$, otherwise,
by Lemma (\ref{premain}), we would have $\rho_X=2$.
Therefore every non-trivial fiber of $\f_\tau$ has dimension
$m+1$.\\
 In view of Proposition (\ref{fiberlocus}), we thus get that $\f_\tau$ is of fiber type and
that the length of $\tau$ is $\ell(\tau)=m+2$; this last fact gives $H \cdot C_\tau=1$. Let us consider $W_\tau$  to be a minimal degree covering family of curves whose numerical class belongs to $\tau$.\\
Since $B$ is not empty, there are rc$(\V)$-equivalence classes of dimension
$\ge f_V+1 \ge m$; let $G$ be one of these classes. Notice that since $\f_\tau$ is equidimensional
with $(m+1)$-dimensional fibers, we have $f_W=m+1$.
By part (1) of Lemma (\ref{primaopoi}) we have
$$\dim \cloc(\W_\tau)_G \ge \dim G +f_W  =2m +1 \ge \dim X,$$
so by Lemma (\ref{numeq}) we deduce $\rho_X=2$, a contradiction which
proves the nefness of $K_X+(m+1)H$. \par
\medskip
Recall now that the extremal ray $\vartheta$ which we fixed at the beginning of the proof has length $\ell(\vt) \ge m+1$ and is generated by a curve $C_\vartheta$ such that $H \cdot \vartheta=1$,
therefore $(K_X+(m+1)H) \cdot \vartheta=0$ and  $K_X+(m+1)H$ is not ample.\\
Let $\Sigma$ be the extremal face contracted by $K_X+(m+1)H$.
We now consider separately two cases, depending on the existence
in $\Sigma$ of a fiber type extremal ray.\par
\medskip
{ \bf\em  Case (b1)} \quad There exists a fiber type extremal ray $\varrho$ in $\Sigma$.\par
\medskip

Let $\f_\varrho$ be the contraction associated with $\varrho$ and denote by  $W_\varrho$   a minimal degree covering family of curves whose numerical class belongs to $\varrho$.\\
By part (2) of Lemma (\ref{primaopoi}), there exists an irreducible $X_C \subset \cloc(\V)_C$ such that $\dim X_C \ge f_V+2$.\\
According to Lemma (\ref{numequns}), every curve in $X_C$ can be written as
$\alpha [C] +\beta [V]$ with $\alpha \ge 0$; in particular, since $\hD \cdot V=0$ by Lemma (\ref{inB}), 
it follows that $\hD$ is not positive
on any curve contained in $X_C$.
By the same lemma $\hD \cdot W_\varrho >0$, hence $[W_\varrho] \not \in \conx{X_C}$. Therefore
part (1) of Lemma (\ref{primaopoi}) gives
$$\dim \cloc(\W_\varrho)_{X_C} \ge \dim X_C + f_{W_\varrho} \ge f_V+2 + m \ge \dim X,$$
where $f_{W_\varrho}$ is the dimension of the general rc$(\W_\varrho)$-equivalence class.\\
Therefore, by applying twice Lemma (\ref{numequns}), we get that the class of every curve in $X$ can be written as
\begin{equation}\label{cone}
\lambda (\alpha[C] +\beta [V]) +\mu[W_\varrho]
\end{equation}
with $\alpha, \lambda \ge 0$ and $\alpha[C] +\beta [V] \in \conx{X_C}$.\\
This has some very important consequences:
first of all,  since we are assuming $\rho_X >2$, this implies that $\rho_X=3$; in particular $[C]$ is not contained in
the plane $\Pi$ in $\cycl(X)$ spanned by $[W_\varrho]$ and $[V]$.
Moreover  the intersection of $\Pi$ with
$\cone(X)$ is a face of $\cone(X)$.\\
We have to prove that $\Pi \cap \overline{\cone}(X)= \langle [V], [W_\varrho] \rangle_c$.
 If this is not the case, then there exists  a class $a$  such that  $\Pi \cap \overline{\cone}(X) = \langle a, [W_\varrho] \rangle_c$ and $\hD \cdot a <0$.\\
Denote by $b \in \cycl(X)$ a class, not proportional to $[V]$, lying in the intersection of $\partial \overline{\cone}(X)$ with the plane $\Pi'=\cycx{X_C}$ and by $\Pi''$ the plane spanned by $[W_\varrho]$ and $b$.\\
Formula (\ref{cone}), traslated in geometric terms, says that ${\cone(X)}$ is contained in the intersection of  half-spaces determined by $\Pi$ and by $\Pi''$ as in the figure below, which shows a cross-section of $\overline{\cone}(X)$.
 \begin{center}
\includegraphics[width=7cm]{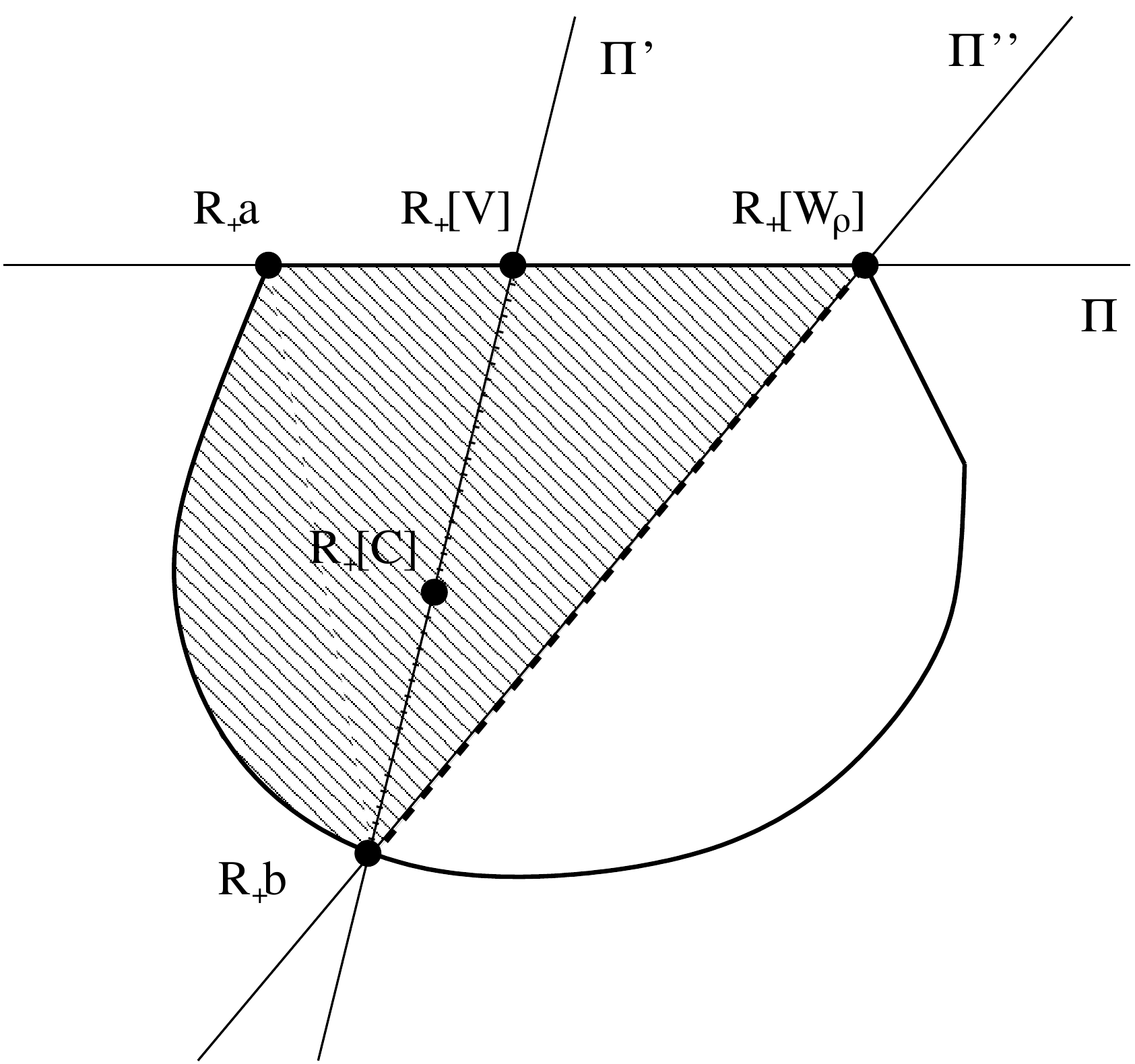}
\end{center}
Let $\{C_n\}$ be a sequence of effective one cycles such that the limit of $\R_+[C_n]$ is $\R_+a$; by continuity, for some $n_0$ we have  $\hD \cdot C_{n} <0$ for $n \ge n_0$, hence  $C_{n} \subset B$ for $n \ge n_0$, and all the above arguments apply to $C_n$, for $n \ge n_0$.
In particular, defining $b_n$ and $\Pi''_n$ as above, we get that, for $n \ge n_0$, ${\cone(X)}$ is contained in the intersection of  half-spaces determined by $\Pi$ and by $\Pi''_{n}$.
Since $\Pi''_n  \rightarrow \Pi$ as  $\R_+[C_n] \rightarrow \R_+a$ and $\rho_X=3$ we get a contradiction.  \par
\medskip
{ \bf\em  Case (b2)} \quad Every ray in $\Sigma$ is birational.\par
\medskip
Let $\eta$ be any ray in $\Sigma$. By Proposition (\ref{fiberlocus}), for every non-trivial fiber of
its associated contraction $\f_\eta$ we have $\dim F \ge \ell(\eta) \ge m+1$. Recalling
that, by Lemma (\ref{premain}), we can assume
$\dim F \le m+1$, we have $\dim F = m+1=\ell(\eta)$. This also implies that, if $C_\eta$ is a minimal degree curve whose numerical class is contained in $\eta$ we have $H \cdot C_\eta=1$.\\
By Proposition (\ref{fiberlocus}), $\f_\eta$ is a divisorial contraction, hence,
by \cite[Theorem 4.1 (iii)]{AWduke}, is the blow-up of a smooth
variety  along a smooth center $T$ of dimension $(n-1)-(m+1) \le m-1$.\par
\medskip
Let $E$ be the exceptional divisor of $\f_\eta$.
By part (2) of Lemma (\ref{primaopoi}), there exists an irreducible $X_C \subset \cloc(\V)_C$ with $\dim X_C \ge f_V+2$.\\
By Lemma (\ref{numequns}) $\Dcap$ has non positive intersection number with every curve in $X_C$.\\
If $E \cap X_C \not = \emptyset$, then there is a fiber $F$ of $\f_\eta$
meeting $X_C$. Counting dimensions, we find that $\dim (F \cap X_C) \ge 1$, which is
a contradiction as $\hD \cdot \eta >0$. So $E \cap X_C = \emptyset$,
whence  $E \cdot V=0$.\\
Therefore $E$ contains  rc$(\V)$-equivalence classes and
$\dim T \ge f_V$, since $\f_\eta$ is finite-to-one on
rc$(\V)$-equivalence classes. Recalling that $f_V \ge m-1$ we derive
$\dim T=f_V=m-1$.\par
\smallskip
Assume that $\dim \Sigma \ge 2$ and let $E_1, E_2$ be the exceptional
loci of two different extremal rays $\eta_1, \eta_2$ in $\Sigma$; since the fibers of the contractions $\f_{\eta_1}$ and $\f_{\eta_2}$ have dimension $m+1$ and $2(m+1) > \dim X$ we have that $E_1 \cap E_2 = \emptyset$.\\
Therefore the contraction $\sigma \colon X \to X'$ of the face $\Sigma$ verifies the
assumptions of Lemma (\ref{blow}), hence there exists an ample line bundle
$H'$ on $X'$ and an unsplit dominating family $V'$ on $X'$ such that $H' \cdot V'=1$ and
$-K_{X'} \cdot V'=-K_X \cdot V \ge \frac{\dim X'-1}{2}$.\\
Denote by $f_{V'}$ the dimension of the general rc$(\V')$-equivalence class.
Since a general fiber of the rc$(\V')$-fibration contains $\loc (V'_{x'})$, we have
$f_{V'}\geq \dim \loc (V'_{x'})-1\geq m-1$.\\
Consider the adjoint divisor $K_{X'}+ mH'$; if it is nef,
or an extremal ray $\vt'$ such that $(K_{X'}+ mH') \cdot \vt' <0$ has a fiber of dimension $\ge m+2$,
then $[V']$ spans an extremal ray by Proposition (\ref{ivan}) or by Lemma (\ref{premain}),
so $[V]$ spans an extremal ray by Lemma (\ref{blow}).\par
\medskip
Let us show that the remaining case does not happen.\\
Assume that there is an extremal ray $\vt'$ such that $(K_{X'}+ mH') \cdot \vt' <0$ and
every fiber of the associated contraction has dimension $\le m+1$. In particular we have
$H' \cdot \vt' =1$, otherwise we would have $\ell(\vt') \ge 2m +1$, hence every non-trivial fiber 
of the associated contraction would have dimension $\ge 2m > m+1$ by Proposition
(\ref{fiberlocus}).
Moreover, we have $(K_{X'}+(m+1)H') \cdot \vt' \le 0$, since $\ell(\vt') \ge m+1$.\\
On the other hand, recalling that $\sigma^\ast H'= H+ \sum E_i$ and that 
$\sigma^\ast K_{X'}=K_X -\sum (m+1)E_i$, we have
$$\sigma^\ast (K_{X'}+(m+1)H')=K_{X}+(m+1)H,$$
so, by the projection formula, $K_{X'}+(m+1)H'$ is ample on $X'$, a contradiction.
\end{proof}

\begin{corollary} Let $(X,H)$ be a polarized manifold of dimension at most five, with
a dominating family of rational curves $V$ such that $H \cdot V=1$.
Then $[V]$ spans an extremal ray of $\cone(X)$.
\end{corollary}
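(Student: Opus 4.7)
The plan is to deduce this directly from Theorem~\ref{main}; no new geometric input is required, only the verification that the numerical hypothesis of that theorem is automatic in this dimension range. Set $n:=\dim X$ and $m:=-K_X\cdot V$; what needs to be checked is $m\ge (n-1)/2$ whenever $n\le 5$.

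The case $n=1$ is trivial, as $\cone(X)$ is one-dimensional. For $n\ge 2$ I would argue that any dominating family of rational curves on a smooth projective variety satisfies $m\ge 2$. Indeed, a general curve parametrized by $V$ is free, so writing $f\colon\pu\to X$ for its normalization, the bundle $f^\ast T_X$ splits as a sum of line bundles of nonnegative degree; the canonical injection $T_{\pu}=\Ol_{\pu}(2)\hookrightarrow f^\ast T_X$ then forces at least one summand to have degree $\ge 2$, whence $-K_X\cdot V=\deg f^\ast T_X\ge 2$. This is the very same bound already invoked in the first paragraph of the proof of Theorem~\ref{main}.

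Now for every $n$ with $2\le n\le 5$ one has $(n-1)/2\le 2\le m$, so the hypothesis of Theorem~\ref{main} is satisfied, and that theorem delivers the conclusion that $[V]$ spans an extremal ray of $\cone(X)$. There is no real obstacle: the statement is essentially a repackaging of Theorem~\ref{main} observing that, in low dimension, the bound $-K_X\cdot V\ge (\dim X-1)/2$ is forced for free and automatic reasons. The only point worth spelling out carefully in the write-up is the freeness argument yielding $m\ge 2$; beyond that, the corollary is immediate.
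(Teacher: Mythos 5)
Your argument is correct and is exactly the (implicit) reasoning behind the corollary in the paper: the bound $-K_X\cdot V\ge 2$ for a dominating family (already noted in the first paragraph of the proof of Theorem~\ref{main}) makes the hypothesis $-K_X\cdot V\ge(\dim X-1)/2$ automatic when $\dim X\le 5$, so Theorem~\ref{main} applies directly.
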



\section{An example}

\setcounter{equation}{0}

In the paper \cite{BSW}, an application of the results about extremality of families of
lines was a relative version of a theorem proved in \cite{Wimu}, which
was the first step towards a conjecture of Mukai for Fano manifolds.\\
This conjecture states that, for a Fano manifold $X$, denoted by $\rho_X$  its
Picard number and by $r_X$ its index, we have
$$\rho_X(r_X-1) \le \dim X.$$
More precisely, in \cite[Theorem B]{Wimu} it was proved that, if $r_X \ge \frac{\dim X}{2}+1$, then
$\rho_X= 1$ unless $X \simeq \proj^{\dim X/2} \times \proj^{\dim X/2}$, while 
in \cite[Theorem 3.1.1]{BSW} it was proved that
a fiber type contraction $\f\colon X \to Y$ supported by $K_X + mL$
with $m \ge \frac{\dim X}{2}+1$ is elementary,
unless $X \simeq \proj^{\dim X/2} \times \proj^{\dim X/2}$.\par
In the last few years some progress has been made towards Mukai conjecture; in
particular it was recently proved in \cite[Theorem 3]{NO} that it holds for a Fano manifold
with (pseudo)index greater than or equal to $\frac{\dim X}{3}+1$.\\
It is therefore natural to ask if the corresponding relative statement is true,
namely, given a fiber type contraction $\f \colon X \to Y$, corresponding to an extremal
face $\Sigma$, supported by $K_X + mL$ with $m \ge \frac{\dim X}{3}+1$
is it possible to find a bound on the dimension of~$\Sigma$?\par
The answer to this question is negative, as we will show with an example in which
$m=\frac{\dim X}{2}$; it follows that \cite[Theorem 3.1.1]{BSW} cannot be improved.

\begin{example}
Let $Z$ be a smooth variety of dimension $k+2$, denote by $Y$ the product
$Z \times \proj^k$ and by $p_1, p_2$ the projections onto the factors.
Let $\{z_i\}_{i=1, \dots, t}$ be points of $Z$ and denote by $F_i$ the fibers of $p_1$ over $z_i$.\\
Let $\sigma\colon X \to Y$ be the blow-up of $Y$ along the union of $F_i$.
The canonical bundle of $X$ is
\begin{equation}\label{serieA}
K_X= \sigma^\ast K_Y + (k+1)\sum_{i=1}^t E_i=\sigma^\ast(p_1^\ast K_Z +p_2^\ast K_{\proj^k}) + (k+1)\sum_{i=1}^t E_i;
\end{equation}
denoting by $\ac:= (p_2 \circ \sigma)^\ast\Ol_{\proj^k}(1)$
and by $L':= \ac -\sum E_i$, we can rewrite formula $(\ref{serieA})$
as
$$K_X+(k+1)L'=\sigma^\ast(p_1^\ast K_Z).$$
It is easy to check that $L'$ is $(p_1 \circ \sigma)$-ample. Let
$A \in \pic(Z)$ be an ample line bundle such that $K_Z+ (k+1)A$ is ample; then $L:=L'+\sigma^\ast (p_1^\ast A)$ is an
ample line bundle on $X$; moreover $L \cdot l=1$ for a line $l$ in the strict transform of a fiber $F$
of $p_1$ not contained in the center of~$\sigma$.\\
The contraction $p_1 \circ \sigma$ is supported by $K_X +(k+1)L=K_X + \frac{\dim X}{2}L$ and contracts a face of
dimension $t+1$.
\end{example}

\begin{remark} The difference between the relative and the absolute case is given by the existence
of minimal horizontal dominating families of rational curves for proper morphisms defined on a
open subset of a Fano manifold (for the definition and the references see
\cite[Remark 6.4]{ACO}). Such families do not exist in general in the relative case.
\end{remark}

\section*{Acknowledgements}

We learned of the results about extremality of families of lines in \cite{BSW}
from an interesting series of lectures given by Paltin Ionescu. We thank the referee for many 
useful suggestions and remarks, which helped to fix some issues in the proofs.


\begin{thebibliography}{10}

\bibitem{ACO}
Marco Andreatta, Elena Chierici, and Gianluca Occhetta.
\newblock Generalized {M}ukai conjecture for special {F}ano varieties.
\newblock {\em Cent. Eur. J. Math.}, 2(2):272--293, 2004.

\bibitem{AWduke}
Marco Andreatta and Jaros{\l}aw~A. Wi{\'s}niewski.
\newblock A note on nonvanishing and applications.
\newblock {\em Duke Math. J.}, 72(3):739--755, 1993.

\bibitem{AWinv}
Marco Andreatta and Jaros{\l}aw~A. Wi{\'s}niewski.
\newblock On manifolds whose tangent bundle contains an ample subbundle.
\newblock {\em Invent. Math.}, 146(1):209--217, 2001.

\bibitem{BI}
Mauro~C. Beltrametti and Paltin Ionescu.
\newblock On manifolds swept out by high
  dimensional quadrics
\newblock {\em Math. Z.}, 260(1):229--236, 2008.

\bibitem{BSW}
Mauro~C. Beltrametti, Andrew~J. Sommese, and Jaros{\l}aw~A. Wi{\'s}niewski.
\newblock Results on varieties with many lines and their applications to
  adjunction theory.
\newblock In {\em Complex algebraic varieties (Bayreuth, 1990)}, volume 1507 of
  {\em Lecture Notes in Math.}, pages 16--38. Springer, Berlin, 1992.

\bibitem{BCD}
Laurent Bonavero, Cinzia Casagrande, and St{\'e}phane Druel.
\newblock On covering and quasi-unsplit families of rational curves.
\newblock {\em J. Eur. Math. Soc.}, 9(1):45--57, 2007.

\bibitem{Cam81}
F.~Campana.
\newblock Cor\'eduction alg\'ebrique d'un espace analytique faiblement
  k\"ahl\'erien compact.
\newblock {\em Invent. Math.}, 63(2):187--223, 1981.

\bibitem{Cam}
F.~Campana.
\newblock Connexit\'e rationnelle des vari\'et\'es de {F}ano.
\newblock {\em Ann. Sci. \'Ecole Norm. Sup. (4)}, 25(5):539--545, 1992.

\bibitem{CO}
Elena Chierici and Gianluca Occhetta.
\newblock The cone of curves of {F}ano varieties of coindex four.
\newblock {\em Internat. J. Math.}, 17(10):1195--1221, 2006.

\bibitem{De}
Olivier Debarre.
\newblock {\em Higher-dimensional algebraic geometry}.
\newblock Universitext. Springer-Verlag, New York, 2001.

\bibitem{Fuj4}
Takao Fujita.
\newblock On polarized manifolds whose adjoint bundles are not semipositive.
\newblock In {\em Algebraic geometry, Sendai, 1985}, volume~10 of {\em Adv.
  Stud. Pure Math.}, pages 167--178. North-Holland, Amsterdam, 1987.

\bibitem{Io}
Paltin Ionescu.
\newblock Generalized adjunction and applications.
\newblock {\em Math. Proc. Cambridge Philos. Soc.}, 99(3):457--472, 1986.

\bibitem{KOc}
Shoshichi Kobayashi and Takushiro Ochiai.
\newblock Characterizations of complex projective spaces and hyperquadrics.
\newblock {\em J. Math. Kyoto Univ.}, 13:31--47, 1973.

\bibitem{Kob}
J{\'a}nos Koll{\'a}r.
\newblock {\em Rational curves on algebraic varieties}, volume~32 of {\em
  Ergebnisse der Mathematik und ihrer Grenzgebiete}.
\newblock Springer-Verlag, Berlin, 1996.

\bibitem{NO2}
Carla Novelli and Gianluca Occhetta.
\newblock Projective manifolds containing a large linear subspace with nef
  normal bundle.
\newblock Michigan Mathematical Journal, to appear.

\bibitem{NO}
Carla Novelli and Gianluca Occhetta.
\newblock Rational curves and bounds on the Picard number of Fano manifolds.
\newblock  {\em Geometriae Dedicata}, 147:207-217,  2010. 


\bibitem{op}
Gianluca Occhetta.
\newblock A characterization of products of projective spaces.
\newblock {\em Canad. Math. Bull.}, 49:270--280, 2006.

\bibitem{Wimu}
Jaros{\l}aw~A. Wi{\'s}niewski.
\newblock On a conjecture of {M}ukai.
\newblock {\em Manuscripta Math.}, 68(2):135--141, 1990.

\bibitem{Wicon}
Jaros{\l}aw~A. Wi{\'s}niewski.
\newblock On contractions of extremal rays of {F}ano manifolds.
\newblock {\em J. Reine Angew. Math.}, 417:141--157, 1991.

\end{thebibliography}
\end{document}